\documentclass[]{amsart}
 \usepackage[]{amsmath, amsthm, amsfonts, amssymb}
 \usepackage[all]{xy}
 \usepackage{hyperref}

 \newcommand {\N} {{\mathbb N}}
 \newcommand {\C} {{\mathbb C}}
 \newcommand {\R} {{\mathbb R}}
 \newcommand {\Z} {{\mathbb Z}}
 
 \newcommand {\PP} {{\mathbb P}}
 \newcommand {\U} {{\mathcal U}}
 \newcommand {\F} {{\mathcal F}}
 \newcommand {\I} {{\mathcal I}}
 \newcommand {\E} {{\mathcal E}}

\newcommand {\dt} {{\bullet}}
\newcommand {\X} {{\mathcal X}}
\newcommand {\dB} {\underline{\Omega}}
\newcommand {\ddB} {\underline{\dB}}
\newcommand {\cL} {{\mathcal L}}
\newcommand {\cC} {{\mathcal C}}
\newcommand {\OO} {{\mathcal O}}

 \newtheorem{thm}[subsection]{Theorem}
\newtheorem{cor}[subsection]{Corollary}
\newtheorem{lemma}[subsection]{Lemma}
\newtheorem{prop}[subsection]{Proposition}

\DeclareMathOperator{\Spec}{Spec}

\begin{document}

\title{Frobenius Amplitude, Ultraproducts, and Vanishing on Singular Spaces}

\author{ Donu Arapura}
 \address{
Department of Mathematics\\
 Purdue University\\
 West Lafayette, IN 47907\\
 U.S.A.}
 \thanks{Author partially supported by the NSF}

\subjclass{14F17, 03C20}

\begin{abstract}
  A general Akizuki-Kodaira-Nakano vanishing theorem is proved for a
  singular complex projective variety by positive characteristic
  techniques. The passage to  characteristic zero is handled using ultraproducts.
\end{abstract}

\maketitle

When $X$ is a singular complex algebraic variety, Du Bois
\cite{dubois} defined a complex of sheaves $\dB_X^j$ which plays the
role of the sheaf of regular $j$-forms on a nonsingular variety. For
example, if $X$ is a projective variety, then $H^i(X,\C)$ decomposes
into a sum  $\oplus H^{i-j}(X,\dB_X^j)$ refining the classical Hodge decomposition.
Our goal is to prove a general vanishing theorem that for any complex
of locally free sheaves on a singular projective variety
$H^i(X,\dB^j_X\otimes \F^\dt)=0$ for $i+j\ge \dim X +\phi(\F^\dt)$,
where the Frobenius ampltitude $\phi(\F^\dt)$ refines the invariant
 introduced in \cite{arapura}. When combined with the bounds on $\phi$
 given in \cite{arapura,ara2}, we recover generalizations of
the  Akizuki-Kodaira-Nakano vanishing theorem
 due to Le Potier, Navarro Aznar and others. The vanishing theorem is
 deduced from an extension of the Deligne-Illusie decomposition
 \cite{deligne-ill} to Du Bois' complex. This  also leads to
 another proof of the Hodge decomposition in the singular case.

 In the first couple of sections, we re-examine the definition of 
 Frobenius amplitude. It is most natural over a field of
 characteristic $p>0$, and we do not change anything here. In our
 earlier work,  we extended the notion into
 characteristic $0$ by essentially taking the supremum of $\phi$ over
 all but finitely many mod
 $p$ reductions. In this paper, we relax the definition by
 replacing ``all but finitely many reductions'' by ``a large set of
 reductions''.  The result is potentially smaller (i.e. better) than
 before. The precise definition
 depends on making a suitable choice of what a large set of primes should
 mean. For the choice to be  suitable, we require that the collection
 of large sets forms a filter which is non principal in the
 appropriate  sense. We
 can  then recast the definition of Frobenius amplitude in terms of
 ultraproducts with respect to ultrafilters containing this filter.
 Since the use of  ultraproducts is not that common in algebraic
 geometry, we include a brief EGA-style treatment of
 them. We should point out that this discussion is not strictly necessary for the main
 result. Readers who prefer to do so can  jump to the
 final section and substitute the original definition for $\phi$
 whenever it occurs.

My thanks to the referees for pointing out various ambiguities in the original, and also
for suggesting the additional reference \cite{schout3}, which gives a
nice overview of ultraproducts in a related context.

\section{Ultraproducts of schemes}
Recall that a filter  on a set $S$ is a collection of nonempty subsets of  $S$
which is closed under finite intersections and supersets.
A property will be said to hold for 
{\em almost all} $s\in S$, with respect to a fixed filter $\F$, if the set of $s$
for which it holds lies in $\F$. 
 An ultrafilter 
is a  filter which is maximal with respect to inclusion. Equivalently,
an ultrafilter is a filter $\U$ such that for any $T\subseteq S$ either
$T\in \U$ or $S-T\in \U$.
For example, the set of all subsets
containing a fixed $s\in S$
is an ultrafilter. Such examples, called principal ultrafilters, are not particularly
interesting. If $S$ is infinite the set of cofinite subsets (complements of finite
sets) forms a non principal filter. By Zorn's lemma, this can be
extended to a non principal ultrafilter. Some results depend crucially
on the filter being an ultrafilter, so
to avoid confusion, we will reserve $\U$
exclusively for an ultrafilter in what follows

Suppose that $\F$ is a filter on $S$.
Given a collection of abelian groups (respectively
commutative rings)  $A_s$ indexed by $S$, the set $I_\F\subset \prod_s A_s$ of elements
which are zero for almost all $s$ forms a subgroup (respectively ideal).
The quotient  $\prod A_s/\F=\prod A_s/I_\F$ is their {\em filter
  product}. (This is commonly referred to as the reduced product, but
this would be too confusing when applied to commutative rings and  schemes.)
The filter product is called an {\em ultraproduct} when $\F=\U$ is an
ultrafilter. 
 Given an element of  $a\in \prod A_s/\F$ represented by a
sequence $(a_s)\in \prod A_s$, following \cite{schout3}, we will refer
to the elements $a_s$ as approximations to $a$.

\begin{prop}\label{prop:ultraprodfields}
If each $A_s$ is a field then any maximal ideal in $\prod A_s$  is given by $I_\U$
for some ultrafilter $\U$ on $S$. All prime are maximal. Suppose that $P$ is a property
expressible by  a set of first order sentences in the language of fields (for example
that the field is algebraically closed or has characteristic
$= n$). If $P$ is satisfied in $A_s$ for
almost all $s$ with respect to an ultrafilter $\U$, then $P$ is satisfied in $\prod A_s/\U$.
\end{prop}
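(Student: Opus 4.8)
My strategy is to treat the three assertions in order, since each rests on the previous one. The main tool throughout is the observation that for a filter $\F$ on $S$, ideals of $\prod A_s$ containing $I_\F$ correspond to ideals of the quotient $\prod A_s/\F$, and when each $A_s$ is a field the ideal structure of $\prod A_s$ is governed entirely by the Boolean algebra of subsets of $S$ via the idempotents $e_T = (\chi_T(s))_s$ for $T\subseteq S$.

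First I would show that any maximal ideal $\mathfrak{m}\subset \prod A_s$ has the form $I_\U$. The key step is: for each idempotent $e_T$, since $e_T(1-e_T)=0\in\mathfrak m$ and $\mathfrak m$ is prime, either $e_T\in\mathfrak m$ or $1-e_T=e_{S-T}\in\mathfrak m$. Define $\U=\{T\subseteq S : e_{S-T}\in\mathfrak m\}=\{T : e_T\notin\mathfrak m\}$. I would check directly that $\U$ is closed under supersets (if $T\subseteq T'$ then $e_T = e_T e_{T'}$, so $e_{T'}\in\mathfrak m$ forces $e_T\in\mathfrak m$) and under finite intersection ($e_{T\cap T'}=e_Te_{T'}$, and if both $e_T,e_{T'}\notin\mathfrak m$ then $e_Te_{T'}\notin\mathfrak m$ by primality), that $\emptyset\notin\U$ and $S\in\U$, and by the dichotomy above that $\U$ is an ultrafilter. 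Then $I_\U\subseteq\mathfrak m$: if $(a_s)$ vanishes on some $T\in\U$, then $(a_s)=(a_s)\cdot e_{S-T}$ and $e_{S-T}\in\mathfrak m$. Since $I_\U$ is maximal — the quotient is easily seen to have no zero divisors by the ultrafilter dichotomy, in fact is a field, which I address next — maximality of $\mathfrak m$ gives $\mathfrak m=I_\U$. For ``all primes are maximal'': every prime $\mathfrak p$ contains some maximal $I_\U$ by the above applied to the same idempotent argument (the construction of $\U$ only used primality), and the quotient $\prod A_s/I_\U$ is a field, so $\mathfrak p=I_\U$.

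To see $\prod A_s/\U$ is a field: given $a=(a_s)$ with $a\notin I_\U$, the set $T=\{s : a_s\neq 0\}$ is not in the complement's sense — more precisely $T\notin\U$ would put $S-T\in\U$, i.e. $a\in I_\U$, so $T\in\U$; define $b_s=a_s^{-1}$ for $s\in T$ and $b_s=0$ otherwise, and then $ab-1$ vanishes on $T\in\U$, so $ab=1$ in the quotient. This also re-proves that $I_\U$ is maximal.

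For the transfer of first-order properties, the cleanest route is to invoke \textbf{\L o\'s's theorem}, which states that for any first-order sentence $\varphi$ in the language of rings, $\prod A_s/\U\models\varphi$ if and only if $\{s : A_s\models\varphi\}\in\U$. Then if $P$ is a set of sentences each holding in almost all $A_s$, the corresponding sets all lie in $\U$, hence each sentence holds in the ultraproduct, so $P$ holds there. If one prefers to keep the paper self-contained I would instead prove the needed special cases directly: ``characteristic $=n$'' for prime $n$ is immediate since $n\cdot 1=0$ in the quotient iff it holds in almost all factors (using the field/ideal correspondence exactly as above), and ``algebraically closed'' follows by, given a monic polynomial over the ultraproduct, lifting its coefficients to sequences, solving in almost every $A_s$ by algebraic closedness, and assembling the roots. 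The main obstacle is purely expository rather than mathematical: deciding how much of \L o\'s's theorem to prove versus cite — the full induction on formula complexity (especially the quantifier step, where existence of a witness almost everywhere requires the ultrafilter property, not just a filter) is standard but tedious, and I would cite \cite{schout3} for it while spelling out the idempotent lemma since that is what makes the scheme-theoretic picture in the rest of the section work.
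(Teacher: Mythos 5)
Your argument is correct, and it reaches the same structural conclusion as the paper (prime $=$ maximal $=$ $I_\U$ for an ultrafilter $\U$, plus \L o\'s for the transfer of first-order properties, which is exactly what the paper cites). The route differs in the key lemma. The paper proves a global, order-preserving bijection between \emph{all} ideals of $\prod A_s$ and \emph{all} filters on $S$, via the zero-set map $z(f)=\{s\mid f_s=0\}$ and the identities $z(fg)=z(f)\cup z(g)$, $z(\alpha f+\beta g+\gamma fg)=z(f)\cap z(g)$; maximal ideals then correspond to maximal filters, i.e.\ ultrafilters, and the characteristic-function argument shows non-ultrafilters give non-prime ideals. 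You instead start from a single prime (or maximal) ideal $\mathfrak m$ and extract its ultrafilter directly from the idempotent dichotomy $e_T(1-e_T)=0$, then close the loop by showing $I_\U\subseteq\mathfrak m$ and that $\prod A_s/\U$ is a field by explicitly inverting a representative on its support set. Your version is more local and arguably more self-contained for the statement as written (it never needs to know that every ideal is of the form $I_F$), while the paper's bijection is a stronger structural fact that packages maximality for free and matches the later identification of $\Spec(\prod A_s)$ with the Stone--\v{C}ech compactification; your idempotent construction yields that identification just as well, since it applies verbatim to every prime. Your remark that the quantifier step of \L o\'s genuinely needs the ultrafilter property (not just a filter) is accurate, and citing it rather than reproving it is consistent with the paper's choice.
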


\begin{proof}
  For $f\in \prod A_s$, let $z(f) =\{s\mid f_s=0\}$. 
  One has $z(fg)=z(f)\cup z(g)$ and $z(\alpha
  f+\beta g+\gamma fg)=z(f)\cap z(g)$ for appropriate coefficients
  depending on $f$ and $g$. From this, it follows that for any
  ideal $I\subset \prod A_s$, $F=z(I)$ is a filter. One can also check
 that if $F$ is a filter, then  $I_F=\{f\mid z(f)\in F\}$ is an ideal,
 such that $z(I_F)=F$ and $I_{Z(I)}=I$. Therefore, we obtain an order preserving
  bijection between the sets of ideals and filters. This proves
 the first statement, Suppose that $F$ is a filter which is not an ultrafilter. Then there exists a subset
  $T\subset S$ such that $T, S-T\in F$ \cite[chap I, lemma 3.1]{bell}
Let $\tau$ be the characteristic function of $T$
$$\tau_s= 
\begin{cases}
  1 &\text{if $s\in T$}\\
 0 & \text{otherwise}
\end{cases}
$$
Then it follows that $\tau, 1-\tau\in I_F$.  As
$\tau(1-\tau)=0$,  $I_F$ is not prime. This implies that prime ideals
necessarily arise from ultrafilters.
The last statement is a special
  case of {\L}os's theorem in model theory \cite[chap 5\S 2]{bell}.
\end{proof}

Filter products can be taken for other structures. For example
$\prod \N/\F$ will inherit the structure of a partially ordered
commutative semiring. By {\L}os's theorem, this satisfies the first
order Peano axioms if $\F=\U$ is an ultrafilter. In particular, it is totally ordered.
Under the diagonal embedding, $\N$ gets identified with an initial
segment of $\prod \N/\U$. The elements of the complement can be
thought of as infinitely large nonstandard numbers.

The basic references for scheme theory are  \cite{ega} and
\cite{hartshorne};  the first reference is a bit better for our
purposes, since it has less reliance on  the noetherian condition. 
To simplify the discussion, all schemes should be
assumed to be separated unless stated otherwise.
Given a collection of affine schemes $\{\Spec A_s\}_{s\in S}$, $\Spec (\prod_s
A_s)$ is their coproduct in the category of affine schemes,
although  {\em  not} in the category of schemes unless $S$ is finite. This is already clear
when $A_s$ are all fields,  $\coprod_s \Spec A_s=S$ while $\Spec (\prod_s A_s)$ is the set of
ultrafilters on $S$ by the previous proposition. In fact, as a space,
$\Spec (\prod_s A_s)$
 is  the Stone-\v{C}ech compactification of $S$.  This is a very
strange scheme from the usual viewpoint (it is not noetherian...), but this is
precisely the sort of construction we need. So it will be convenient
to extend this to  the category of all separated schemes.

\begin{prop}\label{prop:vee}
 There is a functor $\{X_s\}_{s\in S}\mapsto \bigvee_s X_s$ from the category of $S$-tuples of separated
 schemes to the category of separated schemes, 
such that it takes a family of open immersions to an open immersion and
$$\bigvee _s\Spec A_s\cong
 \Spec (\prod_s A_s).$$
 Moreover, there  are canonical morphisms
 $ X_s\to \bigvee X_s$ induced  by projection $\prod A_s\to A_s$ for
 affine schemes.  Given a collection of quasi-coherent
 sheaves $\F_s$ on $X_s$, we have a quasi-coherent sheaf
 $\bigvee_s\F_s$ on $\bigvee_s\X_s$ which restricts to $\F_s$ on each
 component $X_s$.
\end{prop}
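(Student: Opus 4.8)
The plan is to construct $\bigvee$ affine-locally and then glue. On a tuple of affine schemes I would put $\bigvee_s\Spec A_s:=\Spec(\prod_s A_s)$, which is functorial in $\{A_s\}$ via the product functor on rings; the canonical map $X_t\to\bigvee_s X_s$ is $\Spec$ of the projection $\pi_t\colon\prod_s A_s\to A_t$, and for quasi-coherent sheaves $\F_s\cong\widetilde{M_s}$ one sets $\bigvee_s\F_s:=\widetilde{\prod_s M_s}$. The compatibilities asserted in the statement then all come from the identity $(\prod_s M_s)\otimes_{\prod_s A_s}A_t\cong M_t$: the kernel of $\pi_t$ is the direct factor $\prod_{s\ne t}A_s$, so tensoring $\prod_s M_s$ down to $A_t$ is just projection onto $M_t$. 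With $M_s=A_s$ this gives compatibility of the canonical maps with $\bigvee$ of a morphism, and in general it says $\bigvee_s\F_s$ restricts to $\F_t$ along $X_t\to\bigvee_s X_s$.

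For general separated $X_s$ I would glue. Pick an affine open cover $\{U_{s,\alpha}\}$ of each $X_s$; separatedness makes every $U_{s,\alpha}\cap U_{s,\beta}$ affine, so the tuples $\{U_{s,\alpha}\}_s$ and $\{U_{s,\alpha}\cap U_{s,\beta}\}_s$ lie in the domain of the affine construction. Define $\bigvee_s X_s$ by gluing the affine schemes $\bigvee_s U_{s,\alpha}$ along the subschemes $\bigvee_s(U_{s,\alpha}\cap U_{s,\beta})$, and glue $\bigvee_s\F_s$ from the $\widetilde{\prod_s\F_s(U_{s,\alpha})}$; quasi-coherence and the restriction property are local, hence covered by the affine case. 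One checks the cocycle condition by applying functoriality of the affine construction to triple intersections, and independence of the choice of covers by passing to a common refinement; these yield functoriality of $\{X_s\}\mapsto\bigvee_s X_s$, the gluing of the canonical maps, and the fact that a tuple of open immersions induces an open immersion since it does so on each chart.

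The step I expect to be the real obstacle is the one the whole gluing rests on: that $\bigvee$ takes a tuple of open immersions of \emph{affine} schemes to an open immersion. Reducing by quasi-compactness to basic opens $V_s=D(f_s)$ inside $X_s=\Spec A_s$, the question becomes whether $\Spec(\prod_s(A_s)_{f_s})\to\Spec(\prod_s A_s)$ is an open immersion, and this is delicate precisely because localization does not commute with infinite products: in general $\prod_s(A_s)_{f_s}$ \emph{properly} contains $(\prod_s A_s)_{(f_s)_s}$ — one may clear a different power of $f_s$ in each coordinate — so $\bigvee_s D(f_s)$ is not merely the basic open $D((f_s)_s)$. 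Making the gluing legitimate thus requires a careful analysis of this localization: one must pin down its image in $\Spec(\prod_s A_s)$, see that it is open, and identify the restricted structure sheaf with $\widetilde{\prod_s(A_s)_{f_s}}$. I expect handling this discrepancy between $\prod_s(A_s)_{f_s}$ and $(\prod_s A_s)_{(f_s)_s}$ to be the crux; once it is settled, the remaining work — cocycles, independence of covers, and the formal behaviour of $\bigvee_s\F_s$ and the canonical maps — is routine, and separatedness is needed only to keep every relevant intersection affine.
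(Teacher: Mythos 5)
Your overall strategy is the same as the paper's: affine covers, the product construction on charts, and gluing along pairwise intersections, which separatedness keeps affine. The step you single out as the crux is indeed the crux, but your proposal leaves it as a program (``pin down the image, see that it is open'') rather than a proof, and the program cannot be carried out as stated: the map $\Spec(\prod_s (A_s)_{f_s})\to\Spec(\prod_s A_s)$ is in general \emph{not} an open immersion, because its image fails to be open. Note first that $\prod_s (A_s)_{f_s}$ is the localization of $\prod_s A_s$ at the multiplicative set $T=\{(f_s^{n_s})_s : n_s\in\N\}$ of variable powers, so the image on spectra is exactly $W=\{P : P\cap T=\emptyset\}$. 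Now take $S=\N$, $A_s=k[x]$, $f_s=x$, fix a nonprincipal ultrafilter $\U$ on $\N$, and let $P_1=\{(g_s): g_s(1)=0 \text{ for almost all } s \text{ w.r.t. } \U\}$; this is a prime disjoint from $T$, so $P_1\in W$. Given any $g=(g_s)\notin P_1$, the set of $s$ with $g_s(1)\neq 0$ lies in $\U$, hence is infinite; pick $s_1<s_2<\cdots$ in it, set $n_{s_i}=i\,(\operatorname{ord}_x(g_{s_i})+1)$ and $n_s=0$ otherwise, and let $t=(x^{n_s})\in T$. For every $m$ the divisibility $x^{n_s}\mid g_s^m$ fails at $s=s_{m+1}$ (compare orders at $0$, using $g_{s_{m+1}}\neq 0$), so $g\notin\sqrt{(t)}$ and $D(g)$ contains a prime meeting $T$. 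Thus no basic open containing $P_1$ lies inside $W$, so $W$ is not open, and the map is not an open immersion.

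This is a genuine gap, not a routine verification you are entitled to defer, and it propagates into the gluing itself: with $X_s=\PP^1_{k_s}$ and the two standard charts, the map $\Spec(\prod_s k_s[u,u^{-1}])\to\Spec(\prod_s k_s[u])$ is exactly the situation above, so gluing $\Spec(\prod_s A_{s\alpha})$ along $\Spec\bigl(\prod_s \OO(U_{s\alpha}\cap U_{s\beta})\bigr)$ is not a gluing along open subschemes; moreover the two candidate overlap rings coming from the two charts, $(\prod_s A_{s\alpha})_{(f_s)}$ and $(\prod_s A_{s\beta})_{(g_s)}$, are different subrings of $\prod_s\OO(U_{s\alpha}\cap U_{s\beta})$ (uniformly bounded denominators in different variables), so they are not naturally identified either. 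To be fair, you have put your finger on precisely the point that the paper's own three-line proof passes over in silence (it simply says the charts are ``glued together'' and that refinements give isomorphic schemes), so the comparison is not that the paper supplies an argument you missed; rather, your proposal correctly locates the difficulty but does not resolve it, and the particular resolution you anticipate (openness of the image of the chart overlaps) is demonstrably unavailable. Closing the gap would require either reformulating what $\bigvee$ does on non-affine schemes and on open immersions, or a genuinely different gluing device, not just a more careful analysis of the localization you describe.
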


\begin{proof}
Choose affine open covers $\{U_{sj} =\Spec A_{sj}\}_{j\in J_s}$ for each
$X_s$. After replacing each $J_s$ by  the
maximum of the cardinalities of $J_s$ and then allowing repetitions
$U_{sj_\alpha}=U_{s,j_{\alpha+1}}=\ldots$ if necessary, we can
assume that $J_s=J$ is independent of $s$. Then
$\bigvee_s X_s$ is obtained by gluing $\Spec (\prod_i A_{sj})$
together. A refinement of the open cover $\{U_{sj}\}$
can be seen to yield an isomorphic scheme. So the construction 
does not depend on this.
The projections $\Spec (\prod_s A_{sj})\to A_{sj}$ patch to yield
canonical maps $X_s\to \bigvee X_s$. 

Finally, given $\F_s=\widetilde{M_{sj}}$  on $\Spec A_{sj}$ we construct
$\F=\widetilde{\prod  M_{sj}}$ on the above cover, and then patch.
\end{proof}

We refer to $\bigvee X_s$ as the {\em affine coproduct}. Of course, we have
a morphism  $\coprod X_s\to \bigvee X_s$  from the usual coproduct,
but this usually is not an
isomorphism as we noted above.

Given  a scheme $Y$, 
let $\Sigma\subseteq Y$ be a set of points. Define
$$\beta (\Sigma) =\Spec\, \prod_{y\in \Sigma} k(y) $$
where $k(y)$ are the residue fields.
By proposition~\ref{prop:ultraprodfields}, the points of
$\beta(\Sigma)$ are necessarily closed, and they correspond to
ultrafilters on $\Sigma$.   
As a topological space, this can be identified with the Stone-\v{C}ech compactification 
of $\Sigma$.  The embedding $\Sigma\subset \beta(\Sigma)$ maps  a point
to the associated principal ultrafilter. Nonprincipal ultrafilters
give points on the boundary.

\begin{lemma}
If $Y$ is separated, there is a canonical morphism $\iota:\beta(Y)\to
Y$ of schemes induced by the canonical homomorphism
$$A\to \prod_{m\in \Spec A} k(m).$$
on any affine open set $Spec A\subset Y$.
\end{lemma}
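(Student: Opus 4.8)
The plan is to construct $\iota$ by gluing the evident affine-local morphisms, using Propositions~\ref{prop:ultraprodfields} and~\ref{prop:vee} to organize the gluing. First I would fix an affine open cover $\{U_i=\Spec A_i\}_{i\in I}$ of $Y$, which we may take to be finite since $Y$ is quasi-compact; separatedness enters only below. For each $i$ the canonical ring homomorphism $\phi_i\colon A_i\to\prod_{y\in U_i}k(y)$, sending $a$ to the tuple of its images in the residue fields $k(y)$, induces on spectra a morphism $\beta(U_i)=\Spec\prod_{y\in U_i}k(y)\to\Spec A_i=U_i$, which I compose with the inclusion $U_i\hookrightarrow Y$ to obtain $\iota_i\colon\beta(U_i)\to Y$. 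Next I would exhibit the $\beta(U_i)$ as an open cover of $\beta(Y)$: for any subset $\Sigma\subseteq Y$ the ring isomorphism $\prod_{y\in Y}k(y)\cong\prod_{y\in\Sigma}k(y)\times\prod_{y\notin\Sigma}k(y)$ realizes $\beta(\Sigma)$ as the open-and-closed subscheme of $\beta(Y)$ cut out by the corresponding idempotent, so each $\beta(U_i)\hookrightarrow\beta(Y)$ is an open immersion (a special case of Proposition~\ref{prop:vee}); and by Proposition~\ref{prop:ultraprodfields} the points of $\beta(Y)$ are the ultrafilters on the set $Y$, with $\beta(U_i)$ the set of those containing $U_i$, so since finitely many $U_i$ cover $Y$ every ultrafilter contains one of them and $\bigcup_i\beta(U_i)=\beta(Y)$.

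It then remains to verify the cocycle conditions. Since $Y$ is separated, each $U_i\cap U_j=\Spec A_{ij}$ is affine, and the decomposition above identifies the open subscheme $\beta(U_i)\cap\beta(U_j)$ of $\beta(Y)$ with $\beta(U_i\cap U_j)=\Spec\prod_{y\in U_i\cap U_j}k(y)$. Under this identification both $\iota_i$ and $\iota_j$ restrict to $\Spec$ of the canonical homomorphism $A_{ij}\to\prod_{y\in U_i\cap U_j}k(y)$ followed by $U_i\cap U_j\hookrightarrow Y$, because the canonical homomorphisms are compatible with the restriction maps $A_i\to A_{ij}$ and with the projections $\prod_{y\in U_i}k(y)\to\prod_{y\in U_i\cap U_j}k(y)$ — the residue field $k(y)$ and the evaluation into it depending only on $y$ as a point of $Y$. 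The triple overlaps work identically, so the $\iota_i$ glue to a morphism $\iota\colon\beta(Y)\to Y$. A common refinement argument, exactly as in the proof of Proposition~\ref{prop:vee}, shows that the outcome is independent of the chosen cover, which is what makes $\iota$ canonical and what shows that on any affine open $\Spec A\subseteq Y$ it is induced by $A\to\prod_{m\in\Spec A}k(m)$ as asserted.

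I expect the only genuine effort to be the bookkeeping in this last step: keeping track of which index set each product $\prod k(y)$ runs over, and recording the mild functoriality of the canonical homomorphisms $A\to\prod_{m\in\Spec A}k(m)$ — compatibility with localizations $A\to A_f$ and with the idempotent decompositions $\prod_{y\in Y}k(y)\cong\prod_{y\in\Sigma}k(y)\times\prod_{y\notin\Sigma}k(y)$ — which is what forces the local morphisms to agree on overlaps. None of this is deep, but it is precisely where separatedness of $Y$ (affine overlaps) and quasi-compactness of $Y$ (finiteness of the cover, hence $\bigcup_i\beta(U_i)=\beta(Y)$) get used.
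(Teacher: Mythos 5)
Your construction is the one the paper has in mind --- its own proof consists of the single sentence that the claim follows by choosing an affine open cover --- and the details you supply are right: separatedness gives affine overlaps $U_i\cap U_j=\Spec A_{ij}$, the identification $\beta(U_i)\cap\beta(U_j)=\beta(U_i\cap U_j)$ via idempotents is correct, and the canonical evaluation maps $A\to\prod_{m\in\Spec A}k(m)$ are compatible with restriction $A_i\to A_{ij}$, so the local morphisms agree on overlaps and glue.

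One step deserves scrutiny: you take the cover to be finite ``since $Y$ is quasi-compact,'' but quasi-compactness is not among the hypotheses --- the paper's standing assumption is separatedness only. The finiteness is not cosmetic. For an infinite affine cover, a nonprincipal ultrafilter on the set of points of $Y$ need not contain any $U_i$, so $\bigcup_i\beta(U_i)$ can be a proper open subset of $\beta(Y)$ and the glued map is not defined everywhere; refining the cover does not help. In fact for $Y=\coprod_{n\in\N}\Spec k$ there is no morphism $\beta(Y)\to Y$ at all that restricts to the canonical one on each component: the preimages of the components would be infinitely many nonempty pairwise disjoint open sets covering the quasi-compact affine scheme $\beta(Y)$. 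So what you have actually proved is the lemma for quasi-compact separated $Y$ --- which covers every later use in the paper, where the base is affine, integral of finite type, or noetherian --- but as a proof of the statement as literally written there is a gap at the covering step $\bigcup_i\beta(U_i)=\beta(Y)$, and it is a gap inherited from the statement (and from the paper's one-line proof) rather than one introduced by your argument.
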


\begin{proof}
  This is follows immediately by choosing an affine open cover.
\end{proof}

We will call a subset $\Sigma\subset Y$ {\em separating} if
$\beta(\Sigma)\to Y$  is injective on structure sheaves.
For example, $\Sigma=Y$ is separating when it is reduced, and the set of
closed points $Clsd(Y)$ is separating if in addition $Y$ is Jacobson.
The residue field  at an ultrafilter $\U$ on $\Sigma$ regarded as
point  in $\beta(\Sigma)$
 is none other than the ultrapoduct $k(\U)=\prod k(y)/\U$. Let
$$Y_\U = \Spec\, k(\U)\rightarrow \beta(\Sigma)\to \beta(Y)$$
be the corresponding map of schemes. 
Let us call an ultrafilter on  $\Sigma$, or the corresponding point of $\beta(\Sigma)$, {\em pseudo-generic}
if it contains all nonempty opens of $\Sigma$ with respect to the
topology induced from the Zariski topology on $Y$.  Such ultrafilters clearly
exist by Zorn's lemma. Pseudo-generic points will play the role of
generic points. The next lemma shows that such points do in fact
dominate the scheme theoretic generic point.

\begin{lemma}\label{lemma:genfilter}
Suppose that $Y$ is integral and separated and that
$\Sigma\subseteq Y$ is separating.
Let  $\U$ be a pseudo-generic ultrafilter. Then $k(\U)$ contains the function field $K(Y)$.
If $Y'\subset Y$ is a nonempty open subscheme then $Y_\U\cong Y_\U'$ where
$\U'= \{U\in \U\mid U\subseteq Y'\cap \Sigma\}$.
\end{lemma}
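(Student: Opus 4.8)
The plan is to prove the two assertions separately, treating the inclusion $K(Y)\hookrightarrow k(\U)$ first and then the localization statement $Y_\U\cong Y_\U'$.

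For the first assertion, I would reduce to the affine case by choosing a nonempty affine open $\Spec A\subseteq Y$; since $Y$ is integral, $A$ is a domain with fraction field $K=K(Y)$, and the topology on $\Sigma\cap\Spec A$ induced from $Y$ agrees with that induced from $\Spec A$. The key point is that a pseudo-generic ultrafilter $\U$ contains every nonempty Zariski open of $\Sigma$, in particular every basic open $D(a)\cap\Sigma$ for $0\ne a\in A$. Thus for each such $a$, the set of $y\in\Sigma$ with $a(y)\ne 0$ lies in $\U$, which means the image of $a$ in the ultraproduct $k(\U)=\prod_{y\in\Sigma}k(y)/\U$ is a unit: its approximations $a(y)$ are nonzero for almost all $y$, and one may invert them componentwise on that set. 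Hence the canonical map $A\to k(\U)$ (the composite of $A\to\prod k(y)$ from the previous lemma with the quotient) sends every nonzero element of $A$ to a unit, and since $\Sigma$ is separating this map is injective on $A$; by the universal property of localization it factors through an injection $K=\mathrm{Frac}(A)\hookrightarrow k(\U)$. (I should check that this inclusion is independent of the chosen affine chart, but that follows from compatibility of the maps $A\to k(\U)$ under restriction, which is exactly the canonicity in the preceding lemma.)

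For the second assertion, note first that $\U'$ as defined is a filter on $\Sigma$, and in fact an ultrafilter on the subset $\Sigma':=Y'\cap\Sigma$: since $Y$ is integral, $Y'$ is dense, so $\Sigma'$ is a nonempty Zariski open of $\Sigma$ and therefore $\Sigma'\in\U$ because $\U$ is pseudo-generic; given any $T\subseteq\Sigma'$, either $T\in\U$ or $\Sigma-T\in\U$, and intersecting with $\Sigma'\in\U$ shows either $T\in\U'$ or $\Sigma'-T\in\U'$. Moreover $\U'$ is again pseudo-generic on $\Sigma'$ (a nonempty open of $\Sigma'$ is a nonempty open of $\Sigma$, hence in $\U$, hence in $\U'$), and $\Sigma'$ is separating for $Y'$ since it is separating for $Y$ and $Y'$ is open. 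The residue field comparison is now the standard fact that for an ultrafilter $\U$ on $\Sigma$ that concentrates on a subset $\Sigma'$, the natural map of ultraproducts $\prod_{y\in\Sigma}k(y)/\U\to\prod_{y\in\Sigma'}k(y)/\U'$ is an isomorphism — an element of the left side is determined by its approximations on any set in $\U$, in particular on $\Sigma'$, and conversely any family indexed by $\Sigma'$ extends (say by zero) to one indexed by $\Sigma$. So $k(\U)\cong k(\U')$ as fields, and one checks this identification is compatible with the maps to $\beta(Y')\hookrightarrow\beta(Y)$, giving $Y_\U\cong Y_\U'$ as schemes over $Y$.

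The only genuinely delicate point is bookkeeping: making sure the various identifications ($A\to k(\U)$ for different charts, $k(\U)\cong k(\U')$, the maps to $\beta(Y)$) are all compatible, so that the abstract field isomorphisms upgrade to the asserted isomorphisms of schemes and to a genuine \emph{inclusion} $K(Y)\subseteq k(\U)$ rather than merely an abstract embedding. Everything else — that pseudo-generic ultrafilters see all basic opens, that $\Sigma$ separating forces injectivity of $A\to k(\U)$, that an ultrafilter restricts to an ultrafilter on a member set — is routine once the definitions are unwound, and the residue-field-at-an-ultrafilter description $k(\U)=\prod k(y)/\U$ has already been recorded in the text.
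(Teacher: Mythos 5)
Your proof is correct and follows essentially the same route as the paper: reduce to an affine chart $\Spec A$ with $A$ a domain, use pseudo-genericity to see that every nonzero $a\in A$ becomes invertible in $k(\U)$ (the paper phrases this as the vanishing of the kernel of $A\to k(\U)$), so that $K(Y)=\mathrm{Frac}(A)$ embeds, and for the second statement restrict $\U$ to the ultrafilter $\U'$ on $\Sigma'=Y'\cap\Sigma$ and note that the projection $\prod_{y\in\Sigma}k(y)\to\prod_{y\in\Sigma'}k(y)$ is an isomorphism modulo $\U$ and $\U'$. The one point you should make explicit is that the separating hypothesis is used exactly to guarantee that $D(a)\cap\Sigma\neq\emptyset$ for $a\neq 0$ (and that $Y'\cap\Sigma\neq\emptyset$), since if $a$ vanished at every point of $\Sigma$ then $\psi(a)=0$ would force $a=0$; once every nonzero element is known to map to a unit, injectivity of $A\to k(\U)$ is automatic and does not need a separate appeal to the separating condition.
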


\begin{proof}
  We can reduce immediately to the case where $Y=\Spec\, A$, with $A$ a
  domain. By assumption the canonical map  $\psi:A\to \prod_{m\in \Sigma} A/m$ is injective.
As already noted, $L=(\prod_{m\in \Sigma} k(m))/\U$ is a field. An element
  $a\in A$ maps to zero in $L$ if and only if $U_1=\{m\mid a\in m\}\in
  \U$. On the other hand, the complement $U_2=\{m\mid a\notin m\}$ is
  open. If it is nonempty, then it would lie in $\U$ leading to  the
  impossible conclusion that $\emptyset\in \U$. Thus $U_2=\emptyset$
  which implies that $\psi(a)=0$ and therefore $a=0$. Thus
  $L$ contains $A$ and consequently its field of fractions $K(Y)$.
  
For the second part, we can check immediately that $\U'$ is an
ultrafilter on $\Sigma'=Y\cap \Sigma$, and that projection
$$\prod_{y\in \Sigma}k(y)\to \prod_{y\in \Sigma'} k(y)$$
is an isomorphism modulo $\U$ and $\U'$.
\end{proof}

Note that the field $k(\U)$ is usually much bigger
than $k(Y)$.

Given a collection of fields $k_s$ indexed by $S$, and $k_s$-schemes
$X_s$, we define their ultraproduct $\bigvee X_s/\U$ by the cartesian diagram
$$
\xymatrix{
 \bigvee X_s/\U\ar[r]\ar[d] & \bigvee X_s\ar[d] \\ 
 \Spec\, k(\U)=\Spec\, (\prod k_s/\U)\ar[r] & \Spec\, \prod k_s
}
$$
for any ultrafilter on $S$. It would more appropriate to call this the
ultra-coproduct, but we have chosen to be consistent with earlier
usage. The ultraproduct is clearly functorial in the obvious sense. 
Note that this construction makes sense even
when $\U$ is   replaced by a filter, and we will occasionally use it in the
more general setting. In this case the base need no longer  be the
spectrum of  a field.

We record the following which is an immediate consequence of the
construction.

\begin{lemma}\label{lemma:ultraprod}
If, for each $s$, $\{\Spec\, A_{sj}\}_{j\in J_s}$ is an affine open cover of $X_s$, then
$$ \Spec\, (\prod A_{sj_s}\otimes_{\prod k_s}\prod
k_s/\U)\cong
 \Spec\, (\prod A_{sj_s}/\U)$$
is an affine open cover of $\bigvee X_s/\U$ indexed by $\prod J_s/\U$.
\end{lemma}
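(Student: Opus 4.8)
The plan is to reduce everything to the affine situation handled by Proposition~\ref{prop:vee} and then base-change, since $\bigvee X_s/\U$ was \emph{defined} by the cartesian square over $\Spec\,\prod k_s$. First I would fix, for each $s$, an affine open cover $\{\Spec\,A_{sj}\}_{j\in J_s}$ of $X_s$; as in the proof of Proposition~\ref{prop:vee} I may harmlessly enlarge the index sets so that $J_s=J$ is independent of $s$, since a reindexing by repetition does not change $\bigvee_s X_s$ up to canonical isomorphism. By the construction in Proposition~\ref{prop:vee}, the schemes $\Spec\,(\prod_s A_{sj})$ (one for each $j\in J$) form an affine open cover of $\bigvee_s X_s$.

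Next I would pull this cover back along $\Spec\,\prod k_s/\U\to\Spec\,\prod k_s$. Open immersions are stable under base change, so each
$$\Spec\,(\prod_s A_{sj})\times_{\Spec\,\prod_s k_s}\Spec\,(\prod_s k_s/\U)$$
is an open subscheme of $\bigvee_s X_s/\U$, and these cover it because pullback of an open cover is an open cover. The fibre product of affines is computed by the tensor product of rings, which identifies this scheme with $\Spec\,(\prod_s A_{sj}\otimes_{\prod_s k_s}\prod_s k_s/\U)$, matching the left-hand side in the statement.

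The only real content is the ring isomorphism
$$\textstyle\prod_s A_{sj}\otimes_{\prod_s k_s}\bigl(\prod_s k_s/\U\bigr)\;\cong\;\prod_s A_{sj}/\U,$$
and I would prove it directly. Writing $B=\prod_s A_{sj}$, $R=\prod_s k_s$, the target is $B/I_\U B$ where $I_\U\subset R$ is the ideal of $\U$-almost-everywhere-zero sequences, while the source is $B\otimes_R R/I_\U$. The natural map $B\otimes_R R/I_\U\to B/I_\U B$ is always a surjection with the standard presentation, so the point is that $I_\U B$ coincides with the ideal $\prod'_s A_{sj}$ of sequences $(a_{sj})$ vanishing for $\U$-almost all $s$ — i.e. with the kernel defining $\prod_s A_{sj}/\U$. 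That $I_\U B$ is contained in this kernel is immediate since each generator of $I_\U$ is a characteristic-type sequence supported off a set in $\U$. For the reverse inclusion, given $(a_s)$ with $z=\{s:a_s=0\}\in\U$, let $e\in R$ be the sequence with $e_s=0$ for $s\in z$ and $e_s=1$ otherwise; then $e\in I_\U$ and $(a_s)=e\cdot(a_s)\in I_\U B$. Hence the kernel is exactly $I_\U B$, giving the isomorphism. Finally, the induced isomorphisms on the overlaps $\Spec\,(\prod_s A_{sj}\otimes\prod_s k_s/\U)\cap\Spec\,(\prod_s A_{sj'}\otimes\prod_s k_s/\U)$ are compatible with the gluing data (being restrictions of the single map of schemes $\bigvee X_s/\U\to\bigvee X_s$), so the cover is indexed by the set of $\U$-classes of choice functions, namely $\prod_s J_s/\U$; here one uses that two choice functions $j_\bullet,j'_\bullet\in\prod_s J_s$ give the same open of $\bigvee X_s/\U$ precisely when they agree $\U$-almost everywhere, which follows from the same characteristic-idempotent argument applied in $B$. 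The main obstacle, such as it is, is bookkeeping: keeping the localization-versus-product-of-fields distinction straight and checking that the idempotent $e$ above really does lie in $I_\U$ and splits off the vanishing part — everything else is formal base change.
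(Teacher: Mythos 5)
Your argument is correct and is essentially the unwinding of the construction that the paper treats as immediate: base-change the affine cover of $\bigvee_s X_s$ along $\Spec(\prod_s k_s/\U)\to\Spec(\prod_s k_s)$, and identify $\prod_s A_{sj_s}\otimes_{\prod_s k_s}(\prod_s k_s/\U)$ with $\prod_s A_{sj_s}/\U$ by the idempotent argument showing that $I_\U\cdot\prod_s A_{sj_s}$ is exactly the ideal of sequences vanishing $\U$-almost everywhere. (One immaterial caveat: your final ``precisely when'' is too strong when the covers contain repeated opens; for the lemma you only need that choice functions agreeing $\U$-almost everywhere give the same open, which your argument does establish.)
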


It is easy to see from this, that our ultraproduct  coincides with the
identically named notion defined by Schoutens \cite[\S
2.6]{schoutens}.   Let us start by analyzing the topological properties.
By \cite[I, 1.1.1.10]{ega}, a separated scheme is
quasi-compact if and only if it admits a finite cover by open affine
schemes. Let say that the family $X_s$ is {\em uniformly
  quasi-compact} if each $X_s$ is covered by a fixed number of affine
schemes, where the number is independent of $s$. The point of the
definition is the following:

\begin{cor}\label{cor:quasicompact}
  The ultraproduct of  a uniformly
  quasi-compact family of schemes is again quasi-compact.
\end{cor}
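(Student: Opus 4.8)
The plan is to exhibit a \emph{finite} affine open cover of $\bigvee X_s/\U$; by \cite[I, 1.1.1.10]{ega} (really just its trivial direction, which needs no separatedness) this already forces quasi-compactness. Uniform quasi-compactness supplies a number $n$, independent of $s$, such that each $X_s$ is covered by $n$ affine opens. As a first step I would, exactly as in the proof of Proposition~\ref{prop:vee}, pad each such cover with repetitions so that its index set becomes one fixed finite set $J=\{1,\dots,n\}$ for every $s$; write $\{\Spec A_{sj}\}_{j\in J}$ for the resulting cover of $X_s$.

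Next I would invoke Lemma~\ref{lemma:ultraprod}, according to which $\bigvee X_s/\U$ is covered by the affine opens $\Spec(\prod_s A_{sj_s}/\U)$ indexed by the set $\prod_s J/\U$. Thus the corollary is reduced to the purely combinatorial assertion that this index set is finite.

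The key point --- and the one place where the ultrafilter hypothesis (as opposed to a mere filter) is genuinely used --- is that, for a finite set $J$, the diagonal map $J\to\prod_s J/\U$ is a bijection. Injectivity is immediate since $\U$ is proper. For surjectivity, given a representative $(j_s)\in\prod_s J$ I would partition $S$ into the finitely many fibres $\{s\mid j_s=j\}$, $j\in J$; since $\U$ is an ultrafilter, exactly one of these finitely many sets lies in $\U$, say $\{s\mid j_s=j_0\}$, and then $(j_s)$ and the constant sequence $(j_0)_s$ agree on a member of $\U$, hence define the same class. (For a filter that is not an ultrafilter this step can fail, and the index set may be genuinely infinite, which is why Corollary~\ref{cor:quasicompact} is stated for $\U$.)

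Combining these, $\bigvee X_s/\U$ admits an affine open cover of cardinality at most $n$, and is therefore quasi-compact. I do not expect a real obstacle; the only things to be careful about are that the reduction to a constant index set $J$ is harmless (as in Proposition~\ref{prop:vee}) and that the finiteness of $\prod_s J/\U$ really does invoke maximality of $\U$.
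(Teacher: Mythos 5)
Your argument is essentially the paper's own proof: both reduce to the cover of $\bigvee X_s/\U$ from Lemma~\ref{lemma:ultraprod} indexed by $\prod\{1,\dots,N\}/\U$ and identify that index set with $\{1,\dots,N\}$ via the diagonal. The only difference is cosmetic --- the paper invokes {\L}os's theorem for this identification, while you verify the bijection directly by partitioning $S$ into the fibres and using maximality of $\U$, which is a perfectly correct (and arguably more self-contained) way to make the same step.
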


\begin{proof}
Suppose that $X_s$ is covered by $N$ open affine sets
$\{\Spec A_{s,j}\}$, then the cover $\{\Spec \prod A_{sj}/\U\}$
is indexed by $\prod \{1,\ldots, N\}/\U$ which can be
identified (using {\L}os's theorem for example) with $\{1,\ldots, N\}$ under
the diagonal embedding.
\end{proof}

Suppose that  $f:X\to Y$ is a morphism of schemes.
Let $X_y$ denote the fibre over $y\in Y$, then
we have a commutative diagram
$$
\xymatrix{
 \bigvee_{y\in Y} X_y\ar[r]\ar[d] & X\ar[d] \\ 
 \beta(Y)\ar[r] & Y
}
$$
We thus get a morphism $\bigvee X_y \to \beta(Y)\times_Y X$
which is generally {\em not} an isomorphism. To see this,
let $Y= \Spec\, A$ and $X = \Spec A[x]$. Then the morphism corresponds
to the injective map of algebras
$$ (\prod k(m))[x]\to \prod (k(m)[x])$$
which is not surjective unless $\Spec A$ is finite.

Fix a separating set $\Sigma\subseteq Clsd(Y)$ and an
 ultrafilter  $\U$ on $\Sigma$.  We define the  {\em  ultra-fibre} over $\U$ by
$$X_\U = Y_\U\times_{\beta(\Sigma)} \bigvee X_y = \bigvee_{y\in \Sigma}
X_y/\U$$
where we recall that $Y_\U=\Spec k(\U)$. This construction can be
extended to filters as well.
It will be useful to view the ultra-fibre  as a kind of enhanced
fibre. We have a morphism to the usual fibre $\pi: X_\U \to
Y_\U\times_Y X$.
For a principal filter corresponding to $y\in Y$, it is easy to see that
this  gives an isomorphism $X_\U\cong X_y$. From now on, we will
assume  that  that $\U$ is pseudo-generic and that $Y$ is integral.
Then $Y_\U\times_Y X= Y_\U\times_{\Spec K(Y)}X_\eta$
is the generic fibre $X_\eta =\Spec\, K(Y)\times_Y X$ followed by an extension of scalars.
The map $\pi$  is usually not an isomorphism. The ultra-fibre carries
more structure. Any collection of endomorphisms $X_y\to X_y$ gives
rise to an endomorphism of $X_\U$. For example, if the residue fields
of the points in $\Sigma$ have finite characteristic, we get  a Frobenius morphism
$Fr:X_\U\to X_\U$ by assembling the usual $char(k(y))$-power Frobenius
maps on the components $X_y$.

The definition of   a (quasi)coherent sheaf on a ringed space can be
found in \cite[I, chap 1\S 5]{ega}. Roughly speaking, a sheaf of modules is coherent
if  it is locally finitely presented.
(NB: the definition in \cite{hartshorne} is only
correct for noetherian schemes.)
Given a collection of separated $k_s$-schemes $X_s$ and
 quasi-coherent sheaves $\F_s$ on $X_s$. We define the
quasi-coherent sheaf $\bigvee \F_s/\U$ as the pullback of $\bigvee
\F_s$ to  $\bigvee X_s/\U$, for any (ultra)filter $\U$.  Even if all
the  sheaves $\F_s$ are coherent, their ultraproduct need not be.
For a counterexample, we may take a family of locally free sheaves of
unbounded rank. However, the converse statement is true under a
finiteness condition.

\begin{lemma}\label{lemma:ultracoh} Suppose that  $\{X_s\}_{s\in S}$
  is a uniformly  quasi-compact family of schemes, and $\U$ is an
  ultrafilter on $S$.
Any coherent sheaf on $\bigvee X_s/\U$ is given by    $\bigvee
\F_s/\U$ for a collection of coherent sheaves $\F_s$.
\end{lemma}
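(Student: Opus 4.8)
The plan is to reduce to the affine case using a uniform finite affine cover, and then to argue that a coherent sheaf on an affine coproduct $\Spec(\prod A_{sj}/\U)$ comes from a compatible family of coherent sheaves on the $\Spec A_{sj}$. First I would invoke Corollary~\ref{cor:quasicompact}: since $\{X_s\}$ is uniformly quasi-compact, say each $X_s$ is covered by $N$ affine opens $\{\Spec A_{s,j}\}_{j=1}^N$, Lemma~\ref{lemma:ultraprod} gives a finite affine open cover of $\bigvee X_s/\U$ by the $N$ pieces $V_j=\Spec(\prod_s A_{sj}/\U)$ (the index set $\prod J_s/\U$ collapsing to $\{1,\dots,N\}$ via {\L}os, exactly as in the proof of Corollary~\ref{cor:quasicompact}). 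So it suffices to show: (i) a coherent sheaf on each $V_j$ is the ultraproduct of coherent sheaves $\widetilde{M_{sj}}$ on $\Spec A_{sj}$; and (ii) the resulting gluing data on overlaps can itself be realized as an ultraproduct of gluing data, so that the $\F_{sj}$ patch to a global coherent $\F_s$ on $X_s$ with $\bigvee\F_s/\U$ equal to the given sheaf.

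For step (i), let $B=\prod_s A_{sj}/\U$ and let $\cC$ be a coherent $B$-module, so there is a finite presentation $B^a\lblarr{\Phi} B^b\to \cC\to 0$ with $\Phi$ an $a\times b$ matrix over $B$. Each entry of $\Phi$ is represented by a sequence $(\Phi_s)$ with $\Phi_s$ an $a\times b$ matrix over $A_{sj}$; set $M_{sj}=\mathrm{coker}(A_{sj}^a\lblarr{\Phi_s}A_{sj}^b)$, a coherent $A_{sj}$-module. The key point is that cokernel commutes with the ultraproduct here: the functor $B\otimes_{\prod A_{sj}}(-)$ applied to the product presentation $\prod A_{sj}^a\to\prod A_{sj}^b\to\prod M_{sj}$ is right exact, and since $\Spec(\prod A_{sj}/\U)\to\Spec\prod A_{sj}$ is a localization (an open immersion, by the way $\beta$ and the cartesian diagram defining $\bigvee X_s/\U$ are set up), base change is also left exact on this presentation, so $\cC\cong(\prod M_{sj})/\U=\bigvee\widetilde{M_{sj}}/\U$ — here I use Lemma~\ref{lemma:ultraprod} identifying $\prod M_{sj}\otimes_{\prod k_s}\prod k_s/\U$ with $\prod M_{sj}/\U$. (More carefully: one shows directly that $(\prod A_{sj}^b/\U)/(\mathrm{image\ of}\ \prod A_{sj}^a/\U)\cong\prod(A_{sj}^b/\mathrm{im}\,\Phi_s)/\U$, because an element of $\prod A_{sj}^b$ lies in $I_\U+\mathrm{im}(\prod\Phi_s)$ iff it is, for almost all $s$, in $\mathrm{im}\,\Phi_s$, which is precisely membership in $\mathrm{im}(\prod\Phi_s)$ modulo $I_\U$.)

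For step (ii), on each overlap $V_j\cap V_{j'}$ — which is again a distinguished-type open inside both $V_j$ and $V_{j'}$ and is computed componentwise as the ultraproduct of $U_{sj}\cap U_{sj'}$ — we have a coherent-sheaf isomorphism between the two restrictions. Writing both sides as ultraproducts of coherent modules via (i), a coherent isomorphism between $\bigvee M_{sj}/\U$ and $\bigvee M_{sj'}/\U$ over the overlap is given by a matrix (say, after choosing the presentations, by matrices realizing the map and its inverse together with the relations making them mutually inverse); each such matrix is represented by a sequence over the $s$-th overlap ring, and by {\L}os's theorem applied to the finitely many equational conditions ``these matrices define mutually inverse module maps compatible with $\Phi_s,\Phi_{s'}$'' the approximations $\theta_s:M_{sj}|\to M_{sj'}|$ are isomorphisms for almost all $s$. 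Discarding the exceptional set (replacing $S$ by a member of $\U$, which does not change the ultraproduct), the $\theta_s$ are genuine gluing data on each $X_s$, hence define a coherent $\F_s$ on $X_s$, and by construction $\bigvee\F_s/\U$ restricts to the given sheaf on every $V_j$, so equals it. I expect the main obstacle to be the bookkeeping in (ii): verifying that not merely the sheaves but the \emph{cocycle} gluing data descend to approximations simultaneously and satisfy the cocycle identity for almost all $s$ — this is where one must be careful that only finitely many ring- and overlap-indices are involved (guaranteed by uniform quasi-compactness and separatedness, the latter making the triple overlaps affine too) so that a single application of {\L}os's theorem handles all the compatibilities at once.
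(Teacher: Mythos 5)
Your proposal is correct and follows essentially the same route as the paper: reduce to the affine pieces of a uniform $N$-fold cover, realize a coherent module there as the ultraproduct of cokernels of approximations of a finite presentation matrix, and then transfer the gluing isomorphisms to approximations whose invertibility and cocycle identities hold for almost all $s$ by {\L}os's theorem. One small caveat: your aside that $\Spec(\prod A_{sj}/\U)\to\Spec\prod A_{sj}$ is an open immersion is not accurate (the point of a nonprincipal ultrafilter is not open), but this is harmless since flatness over the von Neumann regular ring $\prod k_s$ --- or the direct almost-everywhere membership check you give in parentheses --- already yields the needed exactness.
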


We first start with a  general sublemma.

\begin{lemma}
Any coherent sheaf on $\Spec A$ is given by $\widetilde M$ with $M$ finitely presented.  
\end{lemma}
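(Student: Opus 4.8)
The claim is that a coherent sheaf on an affine scheme $\Spec A$ is $\widetilde M$ for $M$ finitely presented. Since any quasi-coherent sheaf on $\Spec A$ has the form $\widetilde M$ for some $A$-module $M$, the only issue is to upgrade ``coherent as a sheaf of modules'' (i.e.\ locally finitely presented in the sense of \cite[I, chap 1\S5]{ega}) to ``$M$ is a finitely presented $A$-module.'' The plan is to exploit quasi-compactness of $\Spec A$ together with the fact that finite presentation of sheaves is a local-on-the-base property that, over an affine, already forces a global finite presentation.

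The claim is that the local finiteness condition defining coherence in \cite[I, chap 1\S 5]{ega} (namely, local finite presentation as a sheaf of $\OO$-modules) forces the underlying module to be a finitely presented $A$-module. Since every quasi-coherent sheaf on $\Spec A$ has the form $\widetilde M$ for some $A$-module $M$, the entire content is the passage from a local to a global finiteness statement, and the only property of $\Spec A$ that will be used is its quasi-compactness. This is important for the intended application, where $A$ is a non-noetherian ultraproduct ring, but $\Spec A$ is of course still quasi-compact.

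First I would reduce to a finite distinguished cover. By definition of coherence there is an open cover of $\Spec A$ over each member of which $\widetilde M$ admits a presentation $\OO^m\to\OO^n\to \widetilde M\to 0$. Refining by basic opens $D(f)$, which form a basis, and restricting (a finite presentation of a sheaf restricts to one on any open, and $\widetilde M|_{D(f)}=\widetilde{M_f}$ on $\Spec A_f$), then invoking quasi-compactness, I obtain finitely many elements $f_1,\dots,f_n$ with $D(f_1)\cup\cdots\cup D(f_n)=\Spec A$, equivalently $(f_1,\dots,f_n)=A$, such that each localization $M_{f_i}$ is a finitely presented $A_{f_i}$-module.

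The core is then Zariski descent of finite presentation, which I would carry out in two stages. For finite generation: clearing denominators, lift a finite generating set of each $M_{f_i}$ to finitely many elements of $M$; they span a finitely generated submodule $N\subseteq M$ with $N_{f_i}=M_{f_i}$ for all $i$, so the cokernel $Q=M/N$ satisfies $Q_{f_i}=0$ for all $i$, and since $(f_1,\dots,f_n)=A$ one gets $q=\sum a_i f_i^{N_i} q=0$ for every $q\in Q$, hence $Q=0$ and $M=N$ is finitely generated. For finite presentation: choose a surjection $A^k\twoheadrightarrow M$ with kernel $K$; localizing, $K_{f_i}$ is the kernel of $A_{f_i}^k\twoheadrightarrow M_{f_i}$, hence finitely generated because $M_{f_i}$ is finitely presented (the kernel of any surjection from a finite free module onto a finitely presented module is finitely generated); applying the finite-generation step just proved to $K$ shows that $K$ is finitely generated, and therefore $M$ is finitely presented.

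None of the steps is a real obstacle; the point to state with care is the two-stage descent, and in particular the auxiliary lemma that the kernel of a finite free surjection onto a finitely presented module is finitely generated, since this is exactly what lets the finite-presentation descent reduce cleanly to the finite-generation descent. This is routine commutative algebra (see EGA IV, \S1.4 for the general descent statement), short enough that I would include the few lines above rather than merely cite it.
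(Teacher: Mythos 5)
Your argument is correct, but it takes a genuinely different route from the paper's. You localize: by quasi-compactness you refine to a finite distinguished cover $D(f_1),\dots,D(f_n)$ on which $M_{f_i}$ is finitely presented, and then run the standard two-stage Zariski descent — finite generation by lifting generators and killing the cokernel, then finite presentation by noting that the kernel $K$ of $A^k\twoheadrightarrow M$ localizes to the kernel of $A_{f_i}^k\twoheadrightarrow M_{f_i}$, which is finitely generated since $M_{f_i}$ is finitely presented, and applying the finite-generation step to $K$. The paper instead argues globally, with no covering at all: it writes $M=\varinjlim M_\lambda$ as a filtered union of finitely generated submodules and invokes EGA $0$, 5.2.3 (a finite type sheaf that is a filtered colimit of subsheaves equals one of them) to conclude $M$ is finitely generated, then observes that the kernel of the resulting surjection $\OO_{\Spec A}^m\to\F$ is again coherent, hence finitely generated by the same token, so $M$ is finitely presented. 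The paper's proof is shorter but exploits the kernel-stability built into the genuine EGA definition of coherence (finite type plus finite-type kernels of maps from finite free modules); the paper's phrase ``locally finitely presented'' is only a rough gloss for that definition. Your proof uses only local finite presentation, which EGA-coherence implies, so it still covers the stated lemma and in fact proves the formally more general assertion that any locally finitely presented quasi-coherent sheaf on $\Spec A$ is $\widetilde M$ with $M$ a finitely presented $A$-module — at the cost of carrying out the explicit descent.
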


\begin{proof}
When $A$ is noetherian, this is a consequence of \cite[I, 1.1.5.1]{ega}. The
general case is proved the same way.  Any quasi-coherent sheaf
  $\F=\widetilde M$ for a uniquely determined module $M$ \cite[I, 1.1.4.1]{ega}.  Write  $M$ as a direct
  limit of finitely generated submodules $M=\varinjlim M_\lambda$. When $\F$ is coherent,
we have $\F=\widetilde M_\lambda$ for some $\lambda$ by \cite[I,
0.5.2.3]{ega}. Thus  implies that $M$ is finitely generated. Thus we have
a surjection $q:\OO_{\Spec A}^m\to \F$. Since $\ker q$ is coherent, we
see that it is also finitely generated, and this proves the result.
\end{proof}

\begin{proof}[Proof of lemma \ref{lemma:ultracoh}]
  First assume that $X_s=\Spec A_s$. Then the sublemma implies that  $\F$ is given by  a finitely
  presented $\prod A_s/\U$-module $M$. Fix a presentation matrix
  $(f_{ij,s})\in Mat_{m\times n}(\prod A_s/\U)$ for $M$. Then for each $s$, let $M_s$ be the cokernel of
  the approximation $(f_{ij,s})\in Mat_{m\times n}(A_s)$, 
and let $\F_s =\widetilde M_s$. Clearly $M$ is the ultraproduct
  of the corresponding modules, and so $\F\cong\bigvee \F_s/\U$.

For the general case, choose an open cover $\{U_{i,s}=\Spec A_{i,s}\}$ for
each $X_s$ by $N$ open sets. Let  $\F_i$ denote the restrictions of
$\F$ to $U_i=\bigvee_s U_{i,s}/\U$.
 Note that by corollary
\ref{cor:quasicompact}, or more accurately by its proof,
$\{U_i\}_{i=1,\ldots N}$ cover $\bigvee X_s$.
We can construct coherent sheaves $\F_{i,s}$ such that $\F|_{U_i}\cong \bigvee_s
\F_{i,s}/\U$ by the previous paragraph.  The identity maps $\phi_{ji}:\F_i|_{U_{ij}}=
\F_j|_{U_{ij}}$ can be approximated by maps  $\phi_{ji,s}:
\F_{i,s}|_{U_{ij,s}}\to  \F_{j,s}|_{U_{ij,s}}$. Using {\L}os's theorem, we can see that
$\phi_{ij,s}\phi_{ji,s}=id$ and  the cocycle identity
$\phi_{ij,s}=\phi_{ij,s}\phi_{ij,s}$ holds for almost all $s$.
For these values of $s$, we can glue $\F_{i,s}$ together using  with
$\phi_{ij,s}$ to form a coherent sheaf $\F_s$ such that
$\F_s|_{U_{i,s}}\cong \F_{i,s}$ \cite[I, chap 1 3.3.1]{ega}.
For the remaining $s$, we can simply take $\F_s=0$. With these choices,
the lemma is clearly satisfied.  
\end{proof}

\begin{cor}\label{cor:ultracoh}
  A coherent sheaf on the ultra-fibre of a projective morphism  is
  an ultraproduct of coherent sheaves on the fibres.
\end{cor}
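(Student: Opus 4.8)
The plan is to reduce the statement directly to Lemma~\ref{lemma:ultracoh}, so that the only real work is to verify that the fibres of a projective morphism form a uniformly quasi-compact family. Let $f\colon X\to Y$ be projective, fix a separating set $\Sigma\subseteq Clsd(Y)$ and an ultrafilter $\U$ on $\Sigma$, and recall from the construction of the ultra-fibre that $X_\U=\bigvee_{y\in\Sigma}X_y/\U$. Thus a coherent sheaf on $X_\U$ is exactly a coherent sheaf on the ultraproduct, with respect to $\U$, of the $k(y)$-schemes $X_y$ indexed by $S=\Sigma$.

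Next I would observe that $\{X_y\}_{y\in\Sigma}$ is uniformly quasi-compact. By the definition of a projective morphism there is, for a single integer $n$ independent of $y$, a closed immersion $X\hookrightarrow\PP^n_Y$ over $Y$; restricting to the fibre over $y$ gives a closed immersion $X_y\hookrightarrow\PP^n_{k(y)}$. Intersecting $X_y$ with the $n+1$ standard affine charts of $\PP^n_{k(y)}$ yields an affine open cover of $X_y$ by $n+1$ pieces, since a closed subscheme of an affine scheme is affine. Hence each $X_y$ is covered by $N=n+1$ affine opens with $N$ independent of $y$, which is precisely uniform quasi-compactness in the sense used before Corollary~\ref{cor:quasicompact}.

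Finally, applying Lemma~\ref{lemma:ultracoh} to the family $\{X_y\}_{y\in\Sigma}$ and the ultrafilter $\U$ on $\Sigma$, every coherent sheaf on $\bigvee_{y\in\Sigma}X_y/\U=X_\U$ is of the form $\bigvee_{y\in\Sigma}\F_y/\U$ for a collection of coherent sheaves $\F_y$ on the fibres $X_y$. That is the assertion of the corollary.

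I do not expect a genuine obstacle here: once uniform quasi-compactness of the fibres is established, the corollary is immediate from Lemma~\ref{lemma:ultracoh}. The only points needing a little care are reading ``projective morphism'' so that a single $n$ works for all fibres simultaneously, and noting that the restriction of the standard affine cover of $\PP^n$ to a closed subscheme is still an affine cover. Neither of these needs any hypothesis on $Y$ beyond being able to form the ultra-fibre; in particular $Y$ itself need not be quasi-compact, since the index set $\Sigma$ is permitted to be arbitrary.
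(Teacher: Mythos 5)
Your proof is correct and follows exactly the route of the paper, whose entire argument is the one-line observation that the fibres of a projective morphism form a uniformly quasi-compact family, to which Lemma~\ref{lemma:ultracoh} then applies. You have merely spelled out why uniform quasi-compactness holds (a closed immersion $X\hookrightarrow\PP^n_Y$ restricts on each fibre to a cover by $n+1$ affines), which is a harmless and accurate elaboration of the paper's proof.
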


\begin{proof}
  The fibres are uniformly quasi-compact.
\end{proof}

\begin{lemma}\label{lemma:Hiultra} Given a filter $\cL$, 
set $X= \bigvee X_s/\cL$ and $\F= \bigvee \F_s/\cL$.  Then
$H^i(X,\F)\cong \prod H^i(X_s,\F_s)/\cL$.
\end{lemma}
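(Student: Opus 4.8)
The plan is to compute both sides by Čech cohomology with respect to a common finite affine cover. I would treat the case where $\{X_s\}_{s\in S}$ is uniformly quasi-compact, which is the situation in all of the intended applications (cf.\ Corollary~\ref{cor:ultracoh}) and where finiteness of the cover is essential. One chooses affine open covers $\{U_{sj}=\Spec A_{sj}\}_{j\in J}$ of the $X_s$ with $J$ a fixed finite index set independent of $s$, as in the proof of Proposition~\ref{prop:vee}. By the construction there, $\bigvee_s X_s$ is glued from the schemes $\Spec(\prod_s A_{sj})$ along the opens $\bigvee_s(U_{sj}\cap U_{sk})$; base-changing along $\Spec k(\cL)\to \Spec\prod_s k_s$ then exhibits
$$U_j:=\bigvee_s U_{sj}/\cL=\Spec\big(\prod_s A_{sj}/\cL\big),\qquad j\in J,$$
as a finite affine open cover of $X$ (Lemma~\ref{lemma:ultraprod}); moreover, since both the gluing and this base change commute with finite intersections of opens, $U_{j_0}\cap\dots\cap U_{j_p}=\bigvee_s(U_{sj_0}\cap\dots\cap U_{sj_p})/\cL$, which is again affine because $X_s$ is separated.

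Next, since $X$ and each $X_s$ are separated and the intersections above are affine, quasi-coherent cohomology vanishes on them in positive degrees, so Leray's theorem yields $H^i(X,\F)=\check{H}^i(\{U_j\}_{j\in J},\F)$ and $H^i(X_s,\F_s)=\check{H}^i(\{U_{sj}\}_{j\in J},\F_s)$. By Proposition~\ref{prop:vee} and the cartesian square defining $\bigvee(-)/\cL$, for affine opens $V_s\subseteq X_s$ one has $\F(\bigvee_s V_s/\cL)\cong\prod_s\F_s(V_s)/\cL$; applied to the intersections $U_{j_0\dots j_p}$ --- finitely many, since $J$ is finite, so that the filter product commutes with the products entering the Čech construction --- this gives an isomorphism of complexes $\check{C}^{\dt}(\{U_j\}_{j\in J},\F)\cong\prod_s\check{C}^{\dt}(\{U_{sj}\}_{j\in J},\F_s)/\cL$, compatible with the Čech differentials because these are built from restriction maps and $\bigvee(-)/\cL$ is functorial.

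Finally, one uses that cohomology commutes with filter products: for any $S$-tuple of complexes $C_s^{\dt}$ one has $H^i(\prod_s C_s^{\dt}/\cL)\cong\prod_s H^i(C_s^{\dt})/\cL$, since a representative of a class in $H^i$ can be altered on a set outside $\cL$ so as to become an actual cocycle (resp.\ coboundary) in each $C_s^i$, $\cL$ being closed under supersets. Feeding the preceding isomorphism of complexes into this gives $H^i(X,\F)\cong\prod_s H^i(X_s,\F_s)/\cL$.

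The step I expect to require the most care is the geometric bookkeeping of the first paragraph --- verifying that $\bigvee(-)/\cL$ genuinely commutes with the finite intersections of the chosen cover. This cannot be checked naively on principal opens, since the affine coproduct of a family of distinguished opens $D(f_s)$ is in general strictly larger than the distinguished open defined by $(f_s)$; it must instead be read off from the gluing description of $\bigvee_s X_s$. The other point to keep in view is the finiteness of $J$ (equivalently, uniform quasi-compactness): it is exactly what lets the filter product pass through the products appearing in the Čech complex, and without some such hypothesis this comparison of complexes would break down.
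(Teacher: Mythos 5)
Your proof is correct and follows the same basic route as the paper: compute both sides by \v{C}ech cohomology on compatible affine covers, identify the \v{C}ech complex of $\bigvee X_s/\cL$ with the filter product of the \v{C}ech complexes of the $X_s$, and then commute cohomology with the filter product. The differences are in the last step and in scope. The paper handles the last step by observing that $C^\dt=\prod_s C_s^\dt\otimes_{\prod k_s}\prod k_s/\cL$ and that every module over the product of fields $\prod k_s$ is flat, so tensoring is exact and commutes with cohomology, after which exactness of products finishes the computation; you instead verify $H^i(\prod_s C_s^\dt/\cL)\cong\prod_s H^i(C_s^\dt)/\cL$ by hand, adjusting approximations off a set of $\cL$ to get genuine coordinatewise cocycles and coboundaries. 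Both are valid, and your argument is more elementary (it never invokes von Neumann regularity of $\prod k_s$), at the cost of being a cochain-level manipulation rather than a one-line base-change. You also restrict to uniformly quasi-compact families so that the cover can be taken finite; the paper states and proves the lemma without this hypothesis, but its displayed identification of $C^\dt$ with $\prod_s C_s^\dt\otimes\prod k_s/\cL$ also implicitly uses that the quotient by $I_\cL$ passes through the (then finite) products over \v{C}ech multi-indices, so your restriction is the honest setting for this comparison and it covers all the uses made of the lemma in the paper (fibres of projective morphisms, as in Corollary~\ref{cor:ultracoh}); your cautionary remark about distinguished opens versus the gluing description of $\bigvee_s X_s$ is likewise well taken but is already absorbed by the construction in Proposition~\ref{prop:vee}.
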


\begin{proof}
Choose affine open covers $\{U_{i,s}\}$ for each $X_s$. Then
we can compute  $H^i(X_s,\F_s)$
as the $i$th cohomology of the \v{C}ech complex 
$C_s^\dt=\check{C}^\dt(\{U_{i,s}\},\F_s)$. Similarly
 $H^i(X,\F)$ is the cohomology of
$$C^\dt
=\check{C}^\dt(\bigvee_s U_{i,s}/\cL, \bigvee_s \F_s/\cL)= \prod_s
C^\dt_s\otimes_{\prod k_s} \prod k_s/\cL $$
Since modules over a product of fields are flat, we can write
\begin{eqnarray*}
  H^i(C^\dt) &\cong& H^i(\prod C_s^\dt)\otimes \prod k_s/\cL\\
            &\cong& \prod H^i( C_s^\dt)\otimes \prod k_s/\cL\\
            &\cong&  \prod H^i(X_s,\F_s)/\cL
\end{eqnarray*}
\end{proof}

The cohomology groups $H^i(X,\F)$ may be infinite dimensional,
even when the sheaves $\F_s$ are coherent and the schemes are proper.
However, we can assign a generalized dimension $\dim H^i(X_,\F_s)\in \prod
\N/\U$.

Let $f:X\to Y$ be a morphism to an integral
scheme with  $\Sigma\subset Y$ separating.  Suppose that $\U$ is a
pseudo-generic ultrafilter. Then we have a
canonical map $\pi':X_\U\to X_\eta$ to the generic fibre.
 
\begin{lemma}
  Suppose that $f:X\to Y$ is projective, and $Y$ is noetherian.
If $\F$ is a coherent sheaf on $X_\eta$, then $H^i(X_\U,{\pi'}^*\F)\cong
H^i(X_\eta,\F)\otimes k(\U)$.
\end{lemma}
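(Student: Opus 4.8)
The plan is to reduce the statement to the generic behaviour of coherent cohomology along $f$, and then feed this into Lemma~\ref{lemma:Hiultra}.

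First I would shrink the base. By Lemma~\ref{lemma:genfilter}, replacing $Y$ by a nonempty open subscheme and $\U$ by its restriction changes neither $X_\U$, nor $k(\U)$, nor the generic fibre $X_\eta$; so we may assume $Y=\Spec A$ with $A$ a noetherian domain and $K=K(Y)=\mathrm{Frac}(A)$. Since $X_\eta$ is the inverse limit of the $X_{Y'}$ over nonempty open $Y'\subseteq Y$, with affine transition maps, the coherent sheaf $\F$ on the noetherian scheme $X_\eta$ is the restriction of a coherent sheaf $\widetilde\F$ on $X_{Y'}$ for some such $Y'$ \cite{ega}. Shrinking $Y'$ further, finitely many times, I would arrange (by generic flatness and the cohomology-and-base-change theorem \cite{ega}, applied to the \v{C}ech complex of $\widetilde\F$ for a fixed finite affine cover of $X_{Y'}$) that $\widetilde\F$ is flat over $Y'$ and that the formation of $V^i:=H^i(X_{Y'},\widetilde\F)$ commutes with arbitrary base change $Y''\to Y'$. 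Since $f$ is projective and $Y'$ noetherian, $R^if_*\widetilde\F$, and hence $V^i$, is coherent, so $V^i$ is a finitely presented $\OO_{Y'}$-module. Securing one $Y'$ over which all of this holds at once is the only substantive point.

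Now put $\Sigma'=\Sigma\cap Y'$. This is a nonempty open subset of $\Sigma$, hence lies in $\U$, so by Lemma~\ref{lemma:genfilter} we may identify $X_\U=\bigvee_{y\in\Sigma'}X_y/\U'$ and $k(\U)=k(\U')=\prod_{y\in\Sigma'}k(y)/\U'$, where $\U'$ is the restricted ultrafilter. Write $\widetilde\F_y$ for the restriction of $\widetilde\F$ to the fibre $X_y$. I would first check that ${\pi'}^{*}\F\cong\bigvee_{y\in\Sigma'}\widetilde\F_y/\U'$: the composite $X_\U\xrightarrow{{\pi'}}X_\eta\hookrightarrow X_{Y'}$ also factors as $X_\U\to\bigvee_{y\in\Sigma'}X_y\to X_{Y'}$ (both maps induced by the tautological morphisms from the fibres), so ${\pi'}^{*}\F$ is the pullback of $\widetilde\F$ to $X_\U$; and on an affine chart $\Spec C$ of $X_{Y'}$ with $\widetilde\F|_{\Spec C}=\widetilde N$, the pullback of $\widetilde\F$ to $\bigvee_{y\in\Sigma'}X_y$ is computed by the natural map $N\otimes_C\prod_y(C\otimes_A k(y))\to\prod_y\bigl(N\otimes_C(C\otimes_A k(y))\bigr)$, which is an isomorphism because $N$ is finitely presented over $C$. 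Then Lemma~\ref{lemma:Hiultra} gives $H^i(X_\U,{\pi'}^{*}\F)\cong\prod_{y\in\Sigma'}H^i(X_y,\widetilde\F_y)/\U'$.

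Finally I would assemble the pieces. By the base-change property arranged above, the canonical maps $V^i\otimes_{\OO_{Y'}}k(y)\to H^i(X_y,\widetilde\F_y)$ and $V^i\otimes_{\OO_{Y'}}K\to H^i(X_\eta,\F)$ are isomorphisms. Since $K\subseteq k(\U)$ by Lemma~\ref{lemma:genfilter}, this yields a chain of canonical isomorphisms
\[
H^i(X_\eta,\F)\otimes_{K}k(\U)\;\cong\;V^i\otimes_{\OO_{Y'}}k(\U)\;\cong\;\Bigl(\textstyle\prod_{y\in\Sigma'}(V^i\otimes_{\OO_{Y'}}k(y))\Bigr)\big/\U'\;\cong\;\Bigl(\textstyle\prod_{y\in\Sigma'}H^i(X_y,\widetilde\F_y)\Bigr)\big/\U',
\]
where the two middle isomorphisms follow by tensoring a finite presentation of the module $V^i$ first with $\prod_{y\in\Sigma'}k(y)$ and then with $k(\U')=\prod_{y\in\Sigma'}k(y)/\U'$, using that finite products and the quotient by $\U'$ are exact. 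Combining this with the isomorphism of the previous paragraph and tracing through the construction of ${\pi'}$ identifies the canonical base-change map $H^i(X_\eta,\F)\otimes_{K(Y)}k(\U)\to H^i(X_\U,{\pi'}^{*}\F)$ as an isomorphism. The main obstacle is the uniform spreading-out of the second paragraph; once that is in hand, the ultraproduct bookkeeping is routine.
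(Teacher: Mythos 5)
Your argument is correct and follows essentially the same route as the paper: shrink $Y$, spread $\F$ out to a relatively flat sheaf whose cohomology commutes with base change (the paper invokes semicontinuity, you invoke generic flatness plus cohomology-and-base-change for the \v{C}ech complex), then apply Lemma~\ref{lemma:Hiultra} and compare with the generic fibre. The extra steps you supply --- the identification ${\pi'}^{*}\F\cong\bigvee_y\widetilde\F_y/\U'$ via finite presentation on affine charts, and the exchange of $\otimes$ with the product over $\Sigma'$ --- are details the paper leaves implicit, and they are handled correctly.
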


\begin{proof}
After shrinking $X$ and $Y$ if necessary, we can assume that $\F$ is the
restriction of a sheaf $\F'$  on $X$ and that $Y=\Spec\, A$.
Thanks to the semicontinuity theorem, c.f. \cite[III 12.11]{hartshorne},
by shrinking further, we can assume that the cohomology of $\F$ commutes with base
change which  means that $H^i(X,\F')$ is a free
$A$-module such that $H^i(X,\F')\otimes k(y)\cong H^i(X_y,\F'|_{X_y})$
for all $i$ and all (not necessarily closed) $y\in Y$.  By lemma
\ref{lemma:Hiultra},
\begin{eqnarray*}
H^i(X_\U,{\pi'}^*\F) &\cong& \prod H^i(X_y,\F'|_{X_y})/\U\\
                    &\cong&  \prod H^i(X,\F')\otimes_A k(y)/\U\\
                    &\cong&  H^i(X,\F')\otimes_A k(\U)\\
                   &\cong&  H^i(X, \F')\otimes_A K(Y)\otimes_{K(Y)} k(\U)\\
                   &\cong&  H^i(X_\eta, \F)\otimes_{K(Y)} k(\U)
\end{eqnarray*}

\end{proof}

 Let us call a coherent sheaf $\F$ on $X_\U$ {\em standard}
if it isomorphic to $\F'_\U:=\pi'^*\F'$ for some coherent sheaf $\F'$
on $ X_\eta$, where $\pi':X_\U\to X_\eta$ is the canonical map.

\begin{cor}\label{cor:standcohomfinite}
  A standard coherent sheaf on $X_\U$ has finite dimensional
  cohomology as a $k(\U)$-vector space.
\end{cor}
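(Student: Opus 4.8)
The plan is to reduce the finiteness of the cohomology of a standard sheaf $\F \cong \pi'^*\F'$ directly to the previous lemma. By definition $\F'$ is a coherent sheaf on the generic fibre $X_\eta$. The first step is to spread out: since $X_\eta = \Spec K(Y) \times_Y X$ is the generic fibre of the projective morphism $f : X \to Y$ and $\F'$ is coherent on it, there is a nonempty open $Y' \subseteq Y$ and a coherent sheaf on $f^{-1}(Y')$ whose restriction to $X_\eta$ is $\F'$; this uses that $Y$ is noetherian (so $Y$ is in particular integral with generic point $\eta$, and coherent sheaves on the generic fibre extend over a neighbourhood). Without loss of generality, after this shrinking we are in the situation of the preceding lemma with the roles of $X$, $Y$ played by $f^{-1}(Y')$, $Y'$.

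The second step is to check that the ultra-fibre is unchanged by this shrinking, so that the cohomology we want to compute is genuinely the cohomology on the original $X_\U$. By Lemma~\ref{lemma:genfilter}, if $Y' \subseteq Y$ is a nonempty open and $\U' = \{U \in \U \mid U \subseteq Y' \cap \Sigma\}$, then $Y_\U \cong Y_{\U'}$; since $\U$ is pseudo-generic it contains every nonempty open of $\Sigma$, so $Y' \cap \Sigma \in \U$ and $\U'$ is simply the induced ultrafilter on $\Sigma' = Y' \cap \Sigma$. The ultra-fibre $X_\U = \bigvee_{y \in \Sigma} X_y / \U$ then agrees with $\bigvee_{y \in \Sigma'} X_y / \U'$ (the points $y \notin \Sigma'$ form a set not in $\U$, hence do not affect the ultraproduct), and likewise the canonical map $\pi' : X_\U \to X_\eta$ is identified with the corresponding map for $f^{-1}(Y') \to Y'$. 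Thus replacing $Y$ by $Y'$ changes nothing relevant.

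The third and final step is to invoke the preceding lemma: with $\F'$ now a coherent sheaf on $X_\eta$ that is the restriction of a coherent sheaf on (the shrunk) $X$, the lemma gives
$$H^i(X_\U, \pi'^*\F') \cong H^i(X_\eta, \F') \otimes_{K(Y)} k(\U).$$
Since $f$ is projective and $X_\eta$ is a projective scheme over the field $K(Y)$, the cohomology $H^i(X_\eta, \F')$ is a finite-dimensional $K(Y)$-vector space by the usual finiteness theorem for projective morphisms. Tensoring a finite-dimensional $K(Y)$-vector space with the field extension $k(\U) \supseteq K(Y)$ yields a finite-dimensional $k(\U)$-vector space of the same dimension, which is exactly the assertion.

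I do not expect a serious obstacle here; the content was already packaged into the preceding lemma, and the only real point is the harmless verification that shrinking $Y$ to spread out $\F'$ does not disturb the ultra-fibre, which is precisely what Lemma~\ref{lemma:genfilter} is designed to guarantee. If anything, the mild care needed is in matching the canonical map $\pi'$ across the shrinking and in noting that "noetherian" is what licenses both the spreading-out and the finiteness of $H^i(X_\eta, \F')$.
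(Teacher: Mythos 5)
Your argument is correct and is essentially the paper's (the corollary is stated without proof precisely because it is the immediate combination of the preceding lemma with finiteness of coherent cohomology on a projective scheme over the field $K(Y)$, plus the fact that tensoring with $k(\U)$ preserves dimension). Note only that your spreading-out and shrinking step is redundant: the preceding lemma already applies to an arbitrary coherent sheaf on $X_\eta$, the shrinking being handled inside its proof, and your parenthetical ``noetherian, so integral'' conflates two hypotheses that the paper imposes separately.
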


This corollary is not true for arbitrary coherent sheaves. For any
nonstandard natural number
 $N=(N_s)\in\prod \N/\U$, we can define the line bundle $\OO_{\PP^n_\U}(N) = \bigvee_s
 \OO_{\PP^n_s}(N_s)/\U$. Then $H^0(\OO_{\PP^n_\U}(N))$ is infinite
 dimensional in general.

A map of standard sheaves will be called standard if it is the pullback
of a map of sheaves on $X_\eta$. The category of standard sheaves and
maps is equivalent to the category of coherent sheaves on $X_\eta$ thanks to:

\begin{thm}[Van den Dries-Schmidt]
If $X\to Y$ is locally of finite type and $\U$ an ultrafilter, 
then $\pi: X_\U \to Y_\U\times_Y X$ is faithfully flat.
\end{thm}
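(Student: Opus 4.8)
The plan is to reduce immediately to the affine case and then to a purely algebraic statement about ultraproducts of rings. Since flatness and faithful flatness are local on the target, and the target $Y_\U\times_Y X$ is covered by open sets of the form $\Spec(B\otimes_A k(\U))$ where $\Spec B\subseteq X$ and $\Spec A\subseteq Y$ are affine opens (with $B$ an $A$-algebra of finite type), it suffices to treat the situation where $Y=\Spec A$, $X=\Spec B$, $B=A[x_1,\dots,x_n]/I$ with $I$ finitely generated. In that case, by Lemma~\ref{lemma:ultraprod}, the morphism $\pi$ corresponds on rings to the natural map
$$
B\otimes_A k(\U) \;=\; \Bigl(\prod_s k_s/\U\Bigr)[x_1,\dots,x_n]\big/\widetilde{I} \;\longrightarrow\; \prod_s\bigl(k_s[x_1,\dots,x_n]/I_s\bigr)\Big/\U,
$$
where $I_s$ is the approximation of the finite generating set of $I$ (i.e.\ the ideal generated by plugging in the $s$-approximations of the chosen generators). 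So I want to show this map is faithfully flat.

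The key step is flatness, and here is where I would invoke the model-theoretic input, which is the heart of the Van den Dries--Schmidt theorem. Flatness of a finitely presented module map over $B\otimes_A k(\U)$ can be detected by the solvability of finite systems of linear equations: $B\to C$ is flat iff every linear relation $\sum b_i c_i = 0$ with $b_i\in B$, $c_i\in C$ lifts to a relation with coefficients in $B$ (the ``equational criterion for flatness''). Each such relation, and the assertion that it can be so lifted, can be phrased as a first-order statement (with parameters) in the language of rings over the structures $k_s[x]/I_s$ — crucially because $I$ is \emph{finitely} generated, the quantifiers involved range over bounded-degree polynomials, so the relevant data lives uniformly across the fibres. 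By {\L}os's theorem, a relation over the ultraproduct comes from relations over almost all fibres $k_s[x]/I_s$; the polynomial ring over a field is free, hence flat over the subring generated by the image of $k_s$, so each of these lifts; and the lifts assemble (again by {\L}os) to a lift over $B\otimes_A k(\U)$. This gives flatness. For faithfulness, I would show $\pi$ is surjective: any point of $Y_\U\times_Y X$, i.e.\ any prime of $B\otimes_A k(\U)$, is the image of a prime of $\prod_s (k_s[x]/I_s)/\U$ — one builds a compatible system of primes in the fibres whose ultraproduct dominates the given prime, using that each fibre map $k_s\otimes_A B \to k_s[x]/I_s$ is just base change of the surjection $A[x]\twoheadrightarrow B$ and hence has the obvious lifting property, and then {\L}os again. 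A flat ring map that is surjective on spectra is faithfully flat.

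The main obstacle is the uniformity needed to apply {\L}os's theorem correctly: one must be careful that the finite presentation of $B$ over $A$ yields, for almost all $s$, a \emph{uniformly} bounded amount of data (number of generators, their degrees, number and degrees of relations) so that the lifting-of-linear-relations condition is genuinely a first-order sentence with a fixed number of parameters valid in almost every fibre rather than an unbounded scheme of sentences. This is exactly the role of the hypothesis ``locally of finite type'' — it is what makes the Van den Dries--Schmidt argument go through. Secondary care is needed at the globalization step: patching the local flatness statements over an affine cover of $Y_\U\times_Y X$ is harmless since flatness is local, but one should note that $Y_\U\times_Y X$ need not be quasi-compact in general, so the cover may be infinite; this causes no trouble because faithful flatness is checked stalk-by-stalk.
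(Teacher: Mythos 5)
Your reduction to the affine/algebraic statement is the right start (the paper itself does nothing more than observe that $\pi$ is affine and cite \cite[3.1]{schout1}), but your flatness step has a genuine gap precisely where the theorem's content lies. With $R=k(\U)[x_1,\dots,x_n]/Ik(\U)[x]$ and $C=\prod_s (k_s[x]/I_s)/\U$, the equational criterion demands that a relation $\sum_i r_ic_i=0$ with $r_i\in R$, $c_i\in C$ be a $C$-combination of syzygies of $(r_i)$ \emph{with entries in $R$}. {\L}os's theorem does give you relations $\sum_i r_{i,s}c_{i,s}=0$ in almost every fibre, and in each fibre the relation is of course a combination of fibre syzygies; but to descend this to the ultraproduct you must know that the witnesses — generators of the syzygy module, or the combination coefficients — can be chosen of degree bounded \emph{uniformly in $s$} (equivalently, uniformly in the field). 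Finite generation of $I$ bounds the degrees of the data $r_i$ and of the presentation of $R$, not of these witnesses. The assertion ``every relation among polynomials of degree $\le d$ is a combination of syzygies of degree $\le D$'' is first-order only once a specific $D$ is fixed, and producing a single $D$ valid over all fields is exactly Hermann's uniform bound, i.e.\ the theorem of van den Dries--Schmidt \cite{van}; indeed that bound is essentially equivalent to the flatness you are trying to prove. So the sentence ``the quantifiers involved range over bounded-degree polynomials, so the relevant data lives uniformly across the fibres'' assumes what has to be proved, and the remark that ``the polynomial ring over a field is free, hence flat'' does not help, since in each fibre the map is an isomorphism and the only issue is the degree of the lifts.

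The same uniformity is missing from your faithfulness step: to see that a maximal ideal $\mathfrak{m}\subset R$ satisfies $\mathfrak{m}C\neq C$, you must rule out identities $1=\sum_i h_{i,s}m_{i,s}$ in almost all fibres with $h_{i,s}$ of unbounded degree, which again requires an effective ideal-membership/Nullstellensatz bound of Hermann--van den Dries--Schmidt type; ``{\L}os again'' does not supply it. A complete argument must either import those classical bounds explicitly or cite the van den Dries--Schmidt/Schoutens result — the latter is what the paper does. (A secondary slip: ``locally of finite type'' gives $B=A[x_1,\dots,x_n]/I$ with no guarantee that $I$ is finitely generated; this is harmless in the paper's application, where the base is noetherian and the fibres are over fields, but your reduction should say so.)
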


\begin{proof}
  Since $\pi$ is evidently affine, this follows from the version
  given in \cite[3.1]{schout1}.
\end{proof}

Standard coherent ideal sheaves on projective space can be described quite
explicitly. 
The ring  $\prod k_s[x_0,\ldots x_n]/\U$ is graded by the monoid $\prod \N/\U$.
A  finitely generated homogeneous ideal $I\subset \prod k_s[x_0,\ldots x_n]/\U$ with
respect to this grading
determines a family of  homogeneous ideals $I_s\subset k_s[x_0,\ldots
x_n]$ such that $I=\prod I_s/\U$ \cite[2.4.12]{schout3}.
We can  form the associated  coherent sheaf $\I=\coprod \tilde I_s/\U$ on
$\PP^n_\U$.  Let us say that an element $(f_s)$ of $\prod k_s[x_0,\ldots
x_n]/\U$ has finite degree if there exists $d\in \N$ such that $\deg
f_s\le d$ for almost all $s$.

\begin{lemma}
$\I$ is standard coherent if $I$ is generated by a finite set of
elements with finite degrees. 
\end{lemma}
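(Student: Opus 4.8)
The plan is to exhibit $\I$ as $\pi'^*\F'$ for a coherent ideal sheaf $\F'$ on the generic fibre $X_\eta=\PP^n_{K(Y)}$.

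First I would normalize the generators. Let $g^{(1)},\dots,g^{(r)}$ be homogeneous generators of $I$ of finite degree; choosing a common bound $d$ with $\deg g_s^{(l)}\le d$ for almost all $s$, {\L}os's theorem (equivalently, the pigeonhole principle on $\{0,\dots,d\}$) lets us assume that each approximation $g_s^{(l)}$ is homogeneous of a fixed degree $d_l$, after setting it to $0$ on the negligible set where this fails. Put $I_s=(g_s^{(1)},\dots,g_s^{(r)})\subseteq k_s[x_0,\dots,x_n]$. Then $I=\prod I_s/\U$: each $g^{(l)}$ is represented by $(g_s^{(l)})\in\prod_s I_s$, giving $I\subseteq\prod I_s/\U$, and conversely if $h_s=\sum_l a_s^{(l)}g_s^{(l)}$ then $(h_s)$ represents $\sum_l (a_s^{(l)})\cdot g^{(l)}\in I$ (in agreement with \cite[2.4.12]{schout3}, though here only the generators are needed). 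Hence $\I=\bigvee_s\widetilde{I_s}/\U$ is the ideal sheaf of the family $V(I_s)\subseteq\PP^n_{k_s}$.

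Next I would descend to $X_\eta$ and identify the pullback. The finite-degree elements of $R=\prod k_s[x_0,\dots,x_n]/\U$ form a subring which in each degree $m$ is freely spanned over $k(\U)=\prod k_s/\U$ by the finitely many monomials of degree $m$, hence is canonically $k(\U)[x_0,\dots,x_n]$; under this identification $g^{(l)}$ becomes a genuine homogeneous polynomial $\tilde g^{(l)}$ of degree $d_l$. Its finitely many coefficients lie in $K(Y)\subseteq k(\U)$ — this is where the finite-degree hypothesis enters — so, $K(Y)[x_0,\dots,x_n]$ being noetherian, $I'=(\tilde g^{(1)},\dots,\tilde g^{(r)})$ has a coherent ideal sheaf $\F'=\widetilde{I'}$ on $X_\eta$. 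To see $\pi'^*\F'\cong\I$ I would work chart by chart: by Lemma~\ref{lemma:ultraprod} the chart $\{x_i\ne0\}$ of $\PP^n_\U$ is $\Spec(\prod_s k_s[x_0/x_i,\dots,x_n/x_i]/\U)$, lying over the corresponding chart $\Spec K(Y)[x_0/x_i,\dots,x_n/x_i]$ of $X_\eta$, and the image of the dehomogenization $\tilde g^{(l)}(x/x_i)$ under this ring extension is the ultraproduct of the $g_s^{(l)}(x/x_i)$; so by the first step the ideal it generates is exactly $\I$ on that chart. Since $\pi'$ is flat — Van den Dries--Schmidt gives faithful flatness of $X_\U\to Y_\U\times_Y X$, and $Y_\U\times_Y X=X_\eta\times_{K(Y)}k(\U)\to X_\eta$ is flat — the sheaf $\pi'^*\F'$ is again an ideal sheaf, so these identifications glue to $\pi'^*\F'\cong\I$.

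I expect the chart-by-chart identification to be the main obstacle. Its two real inputs are the flatness of $\pi'$ from Van den Dries--Schmidt (without which $\pi'^*$ of an ideal sheaf need not be an ideal sheaf, so the comparison would not even make sense as stated) and the fact that forming ``the ideal generated by the approximations'' commutes both with the ultraproduct and with the affine cover of Lemma~\ref{lemma:ultraprod}, which rests on {\L}os's theorem together with the finiteness of the generating set. The other delicate point, already noted, is that the coefficients of the $\tilde g^{(l)}$ are a priori only elements of the much larger field $k(\U)$; it is precisely the finite-degree condition that makes $I$ the extension of a homogeneous ideal of $K(Y)[x_0,\dots,x_n]$ and so permits the descent to the generic fibre.
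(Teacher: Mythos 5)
Your first two steps --- normalizing the generators, showing $I=\prod I_s/\U$, and identifying the finite-degree part of $\prod k_s[x_0,\ldots,x_n]/\U$ with $k(\U)[x_0,\ldots,x_n]$ --- are exactly the paper's starting point. The gap is the assertion that the coefficients of the polynomials $\tilde g^{(l)}$ lie in $K(Y)$. Nothing forces this: each coefficient is the class of an arbitrary sequence $(c_s)$ with $c_s\in k_s$, hence a general element of $k(\U)$, which is vastly larger than $K(Y)$. For instance, a single linear form $x_0+c\,x_1$ with $c\in k(\U)$ nonstandard generates an ideal that is not extended from $K(Y)[x_0,\ldots,x_n]$, and its ideal sheaf is not pulled back from $\PP^n_{K(Y)}$. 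The finite-degree hypothesis buys polynomiality over $k(\U)$, not rationality of the coefficients over $K(Y)$, so the descent to the generic fibre on which your whole construction of $\F'$ rests fails; note also that in the intended application (the Bayer--Mumford bound) the generators are arbitrary forms of bounded degree over varying fields, so their coefficients are genuinely nonstandard and no such descent is available.

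The paper stops at the field $k(\U)$ instead: it sets $J=I\cap k(\U)[x_0,\ldots,x_n]$, observes that since the finitely many finite-degree generators of $I$ already lie in this subring, $I$ (and likewise its relevant localizations on the standard charts) is the extension of $J$, and concludes that $\I$ is the pullback of the ideal sheaf associated to $J$ on projective space over $k(\U)$ along the canonical flat affine map --- which is what the subsequent finite-dimensionality argument actually uses. Your chart-by-chart comparison and the appeal to Van den Dries--Schmidt flatness are sound in themselves and would complete the argument once the target of the descent is corrected from $\PP^n_{K(Y)}$ to projective space over $k(\U)$; as written, however, the coherent sheaf $\F'$ you propose to pull back does not exist in general.
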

 
\begin{proof}
  Observe that we have an embedding
 $$(\prod k_s/\U)[x_0\ldots x_n]\subset \prod (k_s[x_0,\ldots
 x_n])/\U$$
under which elements on the left can be identified with
finite degree  elements. 
Thus the generators of $I$ are  polynomials. Therefore  $I$ is the
extension of 
$J= I\cap (\prod k_s/\U)[x_0\ldots x_n]$ to the bigger ring, and
the same goes for its localizations. This implies that $\I$ is
the pullback of the ideal sheaf associated to $J$.
\end{proof}

As an easy application of all of this, we show that  the cohomological
complexity  of a homogeneous  ideal, as measured by the
Castelnuovo-Mumford regularity,  can be bounded by a function of the degrees of its
generators. Although such results 
can be obtained more directly with effective bounds \cite{bayer, laz}, the proof here is
quite short.  For  other bounds in ideal theory obtained in the same
spirit, see \cite{van, schout3}. 
Given an ideal sheaf $\I$ on $\PP^n_k$, let $I=\oplus
\Gamma(\I(i))$ denote the corresponding ideal
and $d(I)$ the smallest integer such that
$I$ is generated
by homogeneous polynomials of degree at most $d(I)$.

\begin{lemma}[Bayer-Mumford]
Given $d,  n, i,m$ there exists a constant $C$ such
for any field $k$ and any ideal sheaf $\I$ on $\PP^n_k$ with
$d(\I) =d$, we have
$h^i(\PP_k^n, \I(m))< C$. In particular, the regularity 
 of $\I$ is uniformly bounded by a constant
depending only on $d(I)$ and $n$.
\end{lemma}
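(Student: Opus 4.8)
The plan is to argue by contradiction in both parts: from a hypothetical sequence of counterexamples $\I_s$ on $\PP^n_{k_s}$ (over varying fields $k_s$) I will assemble a single \emph{standard} coherent ideal sheaf $\I$ on the ultraproduct $\PP^n_\U=\bigvee_s\PP^n_{k_s}/\U$, write $\I=\pi^*\mathcal J$ for a coherent ideal sheaf $\mathcal J$ on $\PP^n_{k(\U)}$, and control $H^i(\PP^n_\U,-)$ by pulling back a finite free resolution of $\mathcal J$ along the faithfully flat affine map $\pi$.

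Suppose the first assertion fails for some $d,n,i,m$. Then there are fields $k_s$ and ideal sheaves $\I_s$ on $\PP^n_{k_s}$ with $d(\I_s)=d$ and $h^i(\PP^n_{k_s},\I_s(m))\ge s$. The saturated ideal $I_s=\bigoplus_j\Gamma(\PP^n_{k_s},\I_s(j))$ is generated in degrees $\le d$, hence by at most a fixed number $D=D(n,d)$ of forms $g_{1,s},\dots,g_{D,s}$ of degree $\le d$ (pad with zeros). Fix a non-principal ultrafilter $\U$ on the index set; in $\prod_s k_s[x_0,\dots,x_n]/\U$ the elements $g_r=(g_{r,s})_s$ have finite degree and generate $I:=\prod_s I_s/\U$, so by the lemma describing standard coherent ideal sheaves, $\I:=\bigvee_s\widetilde{I_s}/\U$ is standard, say $\I=\pi^*\mathcal J$ with $\mathcal J$ coherent on $\PP^n_{k(\U)}$, $k(\U)=\prod_s k_s/\U$, and $\pi:\PP^n_\U\to\PP^n_{k(\U)}$ faithfully flat by the theorem of Van den Dries--Schmidt. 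Choosing a finite graded free resolution $0\to F_{n+1}\to\cdots\to F_0\to J\to 0$ (Hilbert's syzygy theorem), sheafifying, pulling back along the flat map $\pi$, and twisting by $\OO(m)$, we obtain a finite resolution of $\I(m)$ whose other terms are finite direct sums of line bundles $\OO_{\PP^n_\U}(c)$. Hence $H^i(\PP^n_\U,\I(m))$ is a subquotient of a finite direct sum of groups $H^{q}(\PP^n_\U,\OO(c))\cong\prod_s H^{q}(\PP^n_{k_s},\OO(c))/\U$ (Lemma \ref{lemma:Hiultra}), and each of these is finite-dimensional over $k(\U)$ because $\dim_{k_s}H^{q}(\PP^n_{k_s},\OO(c))$ is independent of $s$; therefore so is $H^i(\PP^n_\U,\I(m))$. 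But Lemma \ref{lemma:Hiultra} also identifies $H^i(\PP^n_\U,\I(m))$ with $\prod_s H^i(\PP^n_{k_s},\I_s(m))/\U$, which is infinite-dimensional over $k(\U)$ since the dimensions $h^i(\PP^n_{k_s},\I_s(m))\ge s$ are unbounded (an ultraproduct of vector spaces of unbounded finite dimension is infinite-dimensional). This contradiction produces the constant $C$.

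For the regularity assertion it suffices to produce $R=R(d,n)$ such that every ideal sheaf $\I$ with $d(\I)=d$ on any $\PP^n_k$ is $R$-regular, i.e. $H^i(\PP^n_k,\I(R-i))=0$ for $1\le i\le n$; then $\mathrm{reg}(\I)\le R$. If no such $R$ exists, then for each $s$ there are $k_s,\I_s$ with $d(\I_s)=d$ and $H^{i(s)}(\PP^n_{k_s},\I_s(s-i(s)))\ne 0$ for some $1\le i(s)\le n$; passing to a subsequence we may take $i(s)=i_0$ fixed, and then the twists $\tau_s:=s-i_0$ tend to infinity. Build $\PP^n_\U$, $\I=\pi^*\mathcal J$, and the pulled-back, twisted resolution exactly as before, but now twisting by $\OO(T)$ with $T=(\tau_s)_s\in\prod\N/\U$ a nonstandard (``infinitely large'') integer. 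As above $H^{i_0}(\PP^n_\U,\I\otimes\OO(T))$ is a subquotient of finitely many groups $H^{q}(\PP^n_\U,\OO(T-b))\cong\prod_s H^{q}(\PP^n_{k_s},\OO(\tau_s-b))/\U$ with $q\ge i_0\ge 1$; since $\tau_s-b\ge -n$ for almost all $s$, every factor vanishes (recall $H^{q}(\PP^n,\OO(c))=0$ for $q\ge1$ as soon as $c\ge -n$), so $H^{i_0}(\PP^n_\U,\I\otimes\OO(T))=0$. On the other hand Lemma \ref{lemma:Hiultra} identifies this group with $\prod_s H^{i_0}(\PP^n_{k_s},\I_s(\tau_s))/\U$, which is nonzero because every factor is nonzero --- a contradiction. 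Hence $R$ exists.

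I expect the only delicate point to be the recognition of $\I$ as a \emph{standard} sheaf, i.e. as the pullback along $\pi$ of a coherent ideal sheaf on the noetherian scheme $\PP^n_{k(\U)}$: this is exactly where the hypothesis bounding the degrees (hence the number) of the generators of the $I_s$ is used, and it is essential, since it is what makes a finite free resolution --- and hence the reduction to line bundles --- available. Without it the cohomology of the ultraproduct sheaf can be infinite-dimensional even though each $\I_s$ is coherent, as the twists $\OO_{\PP^n_\U}(N)$ by nonstandard $N$ already illustrate, and no contradiction would ensue. Everything else is bookkeeping: exactness of $\prod(-)/\U$ and of flat pullback, Lemma \ref{lemma:Hiultra}, and the two elementary facts that $H^{q}(\PP^n,\OO(c))$ has base-field-independent dimension for each fixed $c$ and vanishes for $q\ge1$ once $c\ge -n$.
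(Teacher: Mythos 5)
Your proof is correct, and its skeleton for the main bound is the same as the paper's: assume a sequence of counterexamples over varying fields, form the ultraproduct ideal sheaf on $\PP^n_\U$, use the bounded number and degree of generators of the saturated ideals to recognize it as a \emph{standard} sheaf, and contradict the infinite-dimensionality of $\prod_s H^i(\PP^n_{k_s},\I_s(m))/\U$ supplied by Lemma \ref{lemma:Hiultra}. Where you diverge is in how finiteness is obtained and in what you actually prove. The paper simply invokes Corollary \ref{cor:standcohomfinite}, whose proof rests on the semicontinuity/base-change theorem for a projective family over a noetherian base; you instead reprove the needed finiteness in this special case by taking a finite graded free resolution of the ideal on $\PP^n_{k(\U)}$ (Hilbert syzygies), pulling it back along the flat map $\pi$, and reducing to line bundles of standard twist, whose ultraproduct cohomology has constant finite dimension. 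This is more hands-on but self-contained, and it buys you something the paper's citation does not: the same resolution, twisted by a nonstandard line bundle $\OO(T)$, yields your second argument, an explicit proof of the regularity clause via vanishing of $H^q(\PP^n_{k_s},\OO(\tau_s-b))$ for $q\ge 1$ and $\tau_s\gg 0$. The paper's written proof only treats the fixed-twist bound $h^i(\I(m))<C$ and leaves the ``in particular'' about regularity unargued (it would otherwise need, e.g., finiteness of the possible Hilbert polynomials plus Mumford's bound), so on that point your treatment is actually more complete. Minor quibbles only: ``passing to a subsequence'' to fix $i_0$ is cleaner phrased as choosing the unique $i_0$ with $\{s\mid i(s)=i_0\}\in\U$, and the appeal to Van den Dries--Schmidt should be read as flatness of $\prod_s k_s[x_0,\dots,x_n]/\U$ over $(\prod_s k_s/\U)[x_0,\dots,x_n]$ for an arbitrary family of fields, which is the form actually needed here (the paper's own application of Corollary \ref{cor:standcohomfinite} is loose in exactly the same way).
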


\begin{proof}
  Suppose the lemma is false. Then there is an infinite sequence of
  examples $\I_s,k_s$ such that $d(I_s)=d$ but $h^i(\I_s(m))\to
  \infty$. Therefore $\I = \bigvee \I_s/\U$ will have infinite
  dimensional $i$th  cohomology for any non principal ultrafilter $\U$. 
Let $N= \binom{n+d+1}{d+1}$. By allowing repetitions if necessary, for
each $s$ we can list generators
  $f_{1,s},\ldots f_{N, s}\in k_s[x_0,\ldots x_n]$ with degrees $\le d$ for
  the ideals $I_s$ corresponding to $\I_s$. Then the sequences $(f_{i,s})$ generate the ideal
  $I$ corresponding to $\I$. By the previous lemma $\I$ is
  standard. This implies, by corollary \ref{cor:standcohomfinite} , that the cohomology is finite dimensional,
  which  is a contradiction.
\end{proof}

\section{$F$-amplitude}

For the remainder of this paper, we fix a 
filter  $\cL$ on the set of prime numbers $\Sigma$ such that for any
$p\in \Sigma$, there exists $L\in \cL$ not containing $p$. The last
condition ensures that  any ultrafilter
containing $\cL$ is necessarily  non principal.
The elements of $\cL$ are the large sets of primes in the
introduction. 
We could take for $\cL$ the collection of cofinite subsets, 
or the filter generated by  complements of subsets of zero Dirichlet density.
Let $\OO_\Sigma=\prod \bar F_p$ be the product of algebraic closures of finite
fields. The ultraproduct  $k(\U)=\OO_\Sigma/\U$, for any $\U\supset \cL$, is an algebraically closed field of
characteristic zero with  cardinality $2^{\aleph_0}$.  Therefore there
is a noncanonical isomorphism  $k(\U)\cong \C$ which we fix for the
discussion below.

Suppose that $k$ is a field of  characteristic $0$. We can assume
without essential loss of generality that it is embedable into $\C$. 
Let $A(k)$ be the set of
finitely generated $\Z$-algebras contained in $k$. 
For each $A\in A(k)$, choose a separating family (defined previously) of maximal ideals
$m_p\in Max(A)$ with embeddings $A/m_p\subset \bar F_p$.  We assume that these choices are
compatible with the  inclusions $A_1\subseteq A_2$
(the existence of such compatible family is straightforward).
Given an algebraic variety $X$ (with a coherent sheaf $\F$) defined
over $k$, a thickening of $X$ (and $\F$) over $A\in A(k)$ is a flat
morphism $\X\to \Spec\, A$ (with an $A$-flat coherent sheaf $\tilde \F$) 
such that  $X\cong \Spec\, k\times_{\Spec  A}\X$ (and $\F$ is the
restriction of $\tilde \F$). A more detailed discussion of thickenings
and related issues can be found in \cite{arapura}. 
For any filter $\U\supseteq \cL$, we can form the
ultra-fibre $\X_\U$ after identifying
$\Sigma$ with the set of $m_p$. Since this is independent of the
thickening, we denote it by  $X_\U$. Ditto for $\F_\U$.  We will
assume that $\U$ is pseudo-generic. As explained
earlier, there is  a map
$\pi:X_\U\to X$ (such that $\F_\U$ is the pullback of $\F$).
Given $N=(N_p)\in \prod \N/\U$, let $Fr^N =X_\U\to X_\U$ be the morphism given by
the $p^{N_p}$th power Frobenius on $\X_{m_p}$.

We recall the original definition of Frobenius or $F$-amplitude from
\cite{arapura}. We will denote it by $\phi_{old}$ to
differentiate it from a variant $\phi$ defined below.
Given a locally free sheaf $\F$ on a variety $X$
defined over a field of characteristic $p>0$,
$\phi_{old}(\F)$ is the smallest natural number $\mu$ such that
for any coherent $\E$,
$$H^i(X,Fr^{N*}(\F)\otimes \E) = 0$$
for $i> \mu$ and $N\gg 0$. In this case, we set
$\phi(\F)=\phi_{old}(\F)$.
 In characteristic $0$, $\phi_{old}$
was defined using reduction modulo $p$:
$$\phi_{old}(\F) = \min_{(\X,\tilde\F)}(\max_{m} \phi(\tilde
\F|_{\X_m}))$$
where we maximize over all closed fibres of a thickening $(\X,\tilde
\F)$ of $(X,\F)$, and then minimize over all thickenings.  Basic
properties including finiteness can be found in \cite{arapura}.
The idea is take the worst case of $\phi$ among all fibres of the best
possible thickening. It is easy to see that for any  thickening,  
$$\phi_{old}(\F)\ge \phi(\tilde \F|_{\X_{m_p}})$$
for  all but finitely $p$ in $\Sigma$.
We  {\em redefine  Frobenius amplitude} in characteristic $0$ as 
  the smallest integer $\mu$ for which
$$\mu\ge \phi(\tilde \F|_{\X_{m_p}})$$
holds for almost all $p$ with respect to $\cL$.

\begin{lemma}
  \begin{enumerate}
\item[]
  \item For any locally free sheaf $\F$, we  have $\phi(\F)\le
    \phi_{old}(\F)$.
\item   $\phi(\F)$ is the smallest integer such
that for any  coherent sheaf of the form $\E=\bigvee \E_s/\cL$ on $\X_\cL$, there exists
$N_0\in \prod \N/\cL$ such that
$$H^i(X_\cL,Fr^{N*}(\F)\otimes \E) = 0$$
for $i>\phi(\F)$ and $N\ge N_0$. (We are suppressing $\pi^*$ above to
simplify notation.)

\item  $\phi(\F)$ is the smallest integer such
that for any ultrafilter $\U\supset \cL$ and any coherent
sheaf $\E$ on $\X_\U$, there exists $N_0\in \prod \N/\U$ such that
$$H^i(X_\U,Fr^{N*}(\F)\otimes \E) = 0$$
for $i>\phi(\F)$ and $N\ge N_0$. 
  \end{enumerate}
\end{lemma}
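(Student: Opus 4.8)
The plan is to prove the three parts roughly in the order (2), (3), (1), since the equivalence of the two ultraproduct-style characterizations is the conceptual core and part (1) will fall out almost for free once we know (2).

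For part (2), I would start from the redefinition: $\phi(\F)$ is the least $\mu$ with $\mu \ge \phi(\tilde\F|_{\X_{m_p}})$ for $\cL$-almost all $p$. Fix a thickening $(\X,\tilde\F)$ and write $\mu=\phi(\F)$. Given a coherent $\E=\bigvee\E_s/\cL$ on $\X_\cL$, Lemma~\ref{lemma:ultracoh} (or rather its filter-product analogue via Lemma~\ref{lemma:Hiultra}) tells us that, after adjusting the approximations on a set not in $\cL$, we may assume each $\E_s$ is a coherent sheaf on $\X_{m_p}$. Now for each $p$ with $\mu\ge\phi(\tilde\F|_{\X_{m_p}})$, the characteristic-$p$ definition of $\phi_{old}$ gives an integer $N_{0,p}$ with $H^i(\X_{m_p}, Fr^{N_p*}(\tilde\F)\otimes\E_p)=0$ for all $i>\mu$ and all $N_p\ge N_{0,p}$; set $N_{0,p}=0$ on the remaining (small) set. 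Put $N_0=(N_{0,p})\in\prod\N/\cL$. Then for $N\ge N_0$, i.e.\ $N_p\ge N_{0,p}$ for $\cL$-almost all $p$, Lemma~\ref{lemma:Hiultra} gives
\[
H^i(X_\cL, Fr^{N*}(\F)\otimes\E)\cong \prod_p H^i(\X_{m_p}, Fr^{N_p*}(\tilde\F)\otimes\E_p)/\cL = 0
\]
for $i>\mu$, because almost all factors vanish. Conversely, to see that no smaller integer works, suppose $\mu'<\mu$; then the set of $p$ with $\phi(\tilde\F|_{\X_{m_p}})>\mu'$ is $\cL$-large, and for each such $p$ we can pick a coherent $\E_p$ and arbitrarily large $N_p$ with $H^{\mu'+1}$ nonzero; assembling these via $\bigvee$ and applying Lemma~\ref{lemma:Hiultra} produces an $\E$ on $\X_\cL$ for which the vanishing fails for infinitely large $N$, so $\mu'$ is not admissible. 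This also needs the observation that $\phi$ is independent of the thickening, which is inherited from \cite{arapura}.

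For part (3), the argument is the same but with $\cL$ replaced by an ultrafilter $\U\supset\cL$, using Lemma~\ref{lemma:ultracoh} in its stated form (now every coherent $\E$ on $X_\U$ really is an honest ultraproduct of coherent $\E_s$). The forward direction is identical: $\mu\ge\phi(\tilde\F|_{\X_{m_p}})$ holds for $\cL$-almost all $p$, hence for $\U$-almost all $p$ since $\cL\subseteq\U$, and Lemma~\ref{lemma:Hiultra} with $\cL$ replaced by $\U$ closes it out. For the converse, the subtlety — and I expect this to be the main obstacle — is that we must rule out \emph{every} $\mu'<\mu$ working for \emph{some} choice of $\U$; but since the set $\{p: \phi(\tilde\F|_{\X_{m_p}})>\mu'\}$ lies in $\cL$ it lies in every $\U\supset\cL$, and one then replays the non-vanishing construction above inside that $\U$. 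A cleaner route is to deduce the "smallest integer" assertion in (3) directly from (2): any $\mu$ valid in the sense of (2) is valid in the sense of (3) by base-changing $\X_\cL\to X_\U$ along $\Spec k(\U)\to\Spec k(\cL)$ and using flatness of modules over a product of fields (as in the proof of Lemma~\ref{lemma:Hiultra}) to see cohomology commutes with this base change; and minimality transfers because a failure witnessed over some $\U$ restricts, via the large set in $\cL$, to a failure over $\cL$.

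Finally, part (1): given a thickening $(\X,\tilde\F)$ realizing $\phi_{old}(\F)$, we have $\phi_{old}(\F)\ge\phi(\tilde\F|_{\X_{m_p}})$ for all but finitely many $p\in\Sigma$ — this is stated in the excerpt just before the redefinition. The finitely many exceptional primes form a set whose complement is cofinite, hence lies in $\cL$ (by the standing hypothesis that $\cL$ contains cofinite sets, or at least that every cofinite set is $\cL$-large — which follows from the defining condition on $\cL$ together with closure under supersets once one notes a cofinite set contains some $L\in\cL$; if this is not literally automatic one simply intersects with a chosen cofinite $L\in\cL$). Therefore $\phi_{old}(\F)\ge\phi(\tilde\F|_{\X_{m_p}})$ for $\cL$-almost all $p$, so $\phi_{old}(\F)$ is one of the integers $\mu$ competing in the definition of $\phi(\F)$, whence $\phi(\F)\le\phi_{old}(\F)$. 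The only thing to be careful about is which family $\{m_p\}$ is used: the redefinition is taken with respect to the fixed compatible separating family, while $\phi_{old}$ minimizes over thickenings but the inequality $\phi_{old}(\F)\ge\phi(\tilde\F|_{\X_{m_p}})$ holds for any thickening and all but finitely many $p$, so no conflict arises.
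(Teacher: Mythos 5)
Your forward implications and part (1) are fine and essentially what the paper does: (1) uses that $\phi_{old}(\F)\ge \phi(\tilde\F|_{\X_{m_p}})$ away from a finite set together with the fact that cofinite sets lie in $\cL$, and the vanishing halves of (2) and (3) come from lemma \ref{lemma:Hiultra} and corollary \ref{cor:ultracoh}. The genuine gap is in your ``smallest integer'' (minimality) arguments for (2) and (3). You assert that if $\mu'<\phi(\F)$ then the set $T=\{p\mid \phi(\tilde\F|_{\X_{m_p}})>\mu'\}$ is $\cL$-large (and in (3) that it ``lies in $\cL$''). For a mere filter this is false: $\mu'<\phi(\F)$ only says that $B=\{p\mid \phi(\tilde\F|_{\X_{m_p}})\le\mu'\}\notin\cL$, and the negation of ``almost all'' with respect to a filter does not put the complementary set into the filter (take $\cL$ the cofinite filter and $B$, $T$ both infinite). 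Since your nonvanishing conclusion is drawn from ``the factors are nonzero on an $\cL$-large set,'' the step as written fails; the same confusion appears in your alternative route for (3), where the needed transfer is from an $\cL$-failure up to a failure over some ultrafilter, not a restriction downwards.

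Both halves can be repaired, but the repair is exactly the point your write-up skips. For (2): a filter product $\prod V_p/\cL$ vanishes iff $\{p\mid V_p=0\}\in\cL$, so it suffices to build $\E$ from bad sheaves $\E_p$ for $p\in T$ and show that for some single degree $i>\mu'$ the zero-set of $H^i$ is not in $\cL$; since only finitely many degrees occur, if all these zero-sets were in $\cL$ their intersection would be an $\cL$-set contained in $B$, contradicting $B\notin\cL$. For (3) one must in addition produce an ultrafilter witnessing the failure: because $B\notin\cL$, the family $\cL\cup\{T\}$ has the finite intersection property and extends to an ultrafilter $\U\supset\cL$ with $T\in\U$, after which the pigeonhole over the finitely many degrees and lemma \ref{lemma:Hiultra} give the nonvanishing over $\U$. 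This ultrafilter-extension step is precisely the observation the paper isolates as the crux of (3), namely $\prod V_p/\cL=0\Leftrightarrow \prod V_p/\U=0$ for all $\U\supset\cL$; your proposal never states or uses it, and the incorrect claim that $T$ is $\cL$-large is what stands in for it.
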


\begin{proof}
  (1) is immediate from the definition. (2) follows from lemma
   \ref{lemma:Hiultra}.
For (3), it is enough apply corollary \ref{cor:ultracoh} and  observe that for any family of vector spaces $V_p$,
$$\prod V_p/\cL = 0 \Leftrightarrow \prod V_p/\U = 0,\, \forall
\U\supset \cL$$
\end{proof}

We use the lemma to extend this notion  to a bounded complex of
coherent sheaves $\F^\dt$: 
 $\phi(\F^\dt)$ is the smallest integer such
that for any coherent sheaf $\E$ on $\X_\U$,
$$H^i(X_\U,\mathbb{L}Fr^{N*}(\F)\otimes^{\mathbb{L}} \E) = 0$$
for $i>\phi(F)$ and $N\gg 0$. 
Note that $Fr^N\circ \pi:X_\U\to X$ need not be flat when $X$ is
singular,  so to get a reasonable notion we are forced to take derived functors.
The following is immediate. 

\begin{lemma}
  For any distinguished triangle 
$$\F_1^\dt\to \F_2^\dt\to \F_3^\dt\to \F_1^\dt[1]$$
$\phi(\F_2^\dt)\le \max(\phi(\F_1^\dt),\phi(\F_3^\dt))$
\end{lemma}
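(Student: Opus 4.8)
The plan is to reduce the statement to the long exact sequence in cohomology coming from the given distinguished triangle, after applying the relevant derived functors. First I would fix an ultrafilter $\U\supseteq\cL$, a coherent sheaf $\E$ on $\X_\U$, and set $\mu = \max(\phi(\F_1^\dt),\phi(\F_3^\dt))$. The key observation is that $\mathbb{L}Fr^{N*}(-)\otimes^{\mathbb{L}}\E$ is a triangulated (exact) functor on the bounded derived category of coherent sheaves, so it carries the distinguished triangle $\F_1^\dt\to\F_2^\dt\to\F_3^\dt\to\F_1^\dt[1]$ to a distinguished triangle
$$\mathbb{L}Fr^{N*}(\F_1^\dt)\otimes^{\mathbb{L}}\E\to \mathbb{L}Fr^{N*}(\F_2^\dt)\otimes^{\mathbb{L}}\E\to \mathbb{L}Fr^{N*}(\F_3^\dt)\otimes^{\mathbb{L}}\E\to[1]$$
on $X_\U$, for each $N$.

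Next I would take the associated long exact sequence of hypercohomology groups $H^i(X_\U,-)$. By the definition of $\phi$, there is $N_1\in\prod\N/\U$ with $H^i(X_\U,\mathbb{L}Fr^{N*}(\F_1^\dt)\otimes^{\mathbb{L}}\E)=0$ for $i>\phi(\F_1^\dt)$ and $N\ge N_1$, and likewise $N_3$ handling $\F_3^\dt$; set $N_0 = \max(N_1,N_3)$, which makes sense since $\prod\N/\U$ is totally ordered by {\L}os's theorem. For $i>\mu$ and $N\ge N_0$, the long exact sequence sandwiches $H^i(X_\U,\mathbb{L}Fr^{N*}(\F_2^\dt)\otimes^{\mathbb{L}}\E)$ between $H^i(X_\U,\mathbb{L}Fr^{N*}(\F_1^\dt)\otimes^{\mathbb{L}}\E)=0$ and $H^i(X_\U,\mathbb{L}Fr^{N*}(\F_3^\dt)\otimes^{\mathbb{L}}\E)=0$, so it vanishes as well. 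Since $\E$ and $\U$ were arbitrary, this shows $\mu$ has the defining property of $\phi(\F_2^\dt)$, whence $\phi(\F_2^\dt)\le\mu$.

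The only subtle point — and the one I would spell out — is that $N_0$ can be chosen to work uniformly, i.e. that the bound ``$N\gg 0$'' in the definition is genuinely of the form ``$N\ge N_0$'' for a single nonstandard $N_0$, rather than an unspecified cofinal condition; this is exactly what the formulation via $\prod\N/\U$ in the earlier lemma provides, so no extra work is needed beyond invoking the total ordering of $\prod\N/\U$. A secondary point worth a sentence is that the derived pullback and derived tensor are being taken in $D^b_{\mathrm{coh}}$ and that $\mathbb{L}Fr^{N*}(\F^\dt)\otimes^{\mathbb{L}}\E$ has bounded, coherent cohomology so that the hypercohomology long exact sequence is available; this is implicit in the very definition of $\phi(\F^\dt)$ just given. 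I do not expect any real obstacle here: the statement is, as the paper says, immediate once one records that $\phi(\F^\dt)$ is characterized by a vanishing condition stable under the two-out-of-three principle in a triangulated category.
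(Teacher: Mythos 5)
Your argument is correct and is exactly the standard one the paper has in mind: the paper offers no written proof (it declares the lemma ``immediate''), and the evident justification is precisely your long exact hypercohomology sequence for the triangle after applying the exact functor $\mathbb{L}Fr^{N*}(-)\otimes^{\mathbb{L}}\E$, together with the observation that a single $N_0$ can be chosen because $\prod \N/\U$ is totally ordered. No gaps; your write-up simply makes explicit what the author left to the reader.
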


\section{Frobenius split complexes}

Suppose for the moment that $X$ is a  scheme in
characteristic $p>0$ or an ultra-fibre, so that $X$ possesses a
Frobenius morphism $Fr$.
Let  $(\cC^\dt, F)$ be  a  bounded filtered complex of sheaves on $X$
with a finite filtration.
By a {\em Frobenius splitting } of the complex, we mean 
a diagram of quasi-isomorphisms
$$\bigoplus_i Gr_F^i \cC^\dt \stackrel{\sigma_1}{\longrightarrow} K^\dt
\stackrel{\sigma_2}{\longleftarrow}  Fr_* \cC^\dt
$$
or equivalently a representative for an isomorphism
$$\sigma:\bigoplus_i Gr_F^i \cC^\dt \cong Fr_* \cC^\dt
$$
in the derived category. 
A filtered complex is called Frobenius split 
possesses a Frobenius splitting. Although the terminology is
convenient in the present context, we warn
the reader  that  it conflicts slightly
with the standard notion of a  Frobenius split variety.
We make the collection of filtered complexes with splittings into
a category with morphisms given by  a morphism of filtered
complexes $(\cC_1,F_1)\to (\cC_2, F_2)$ together with a compatible
commutative diagram
$$
\xymatrix{
 \bigoplus_i Gr_F^i \cC_1^\dt\ar^{\sim}[r]\ar[d] & K_1\ar[d] & Fr_*\cC_1^\dt\ar_{\sim}[l]\ar[d] \\ 
 \bigoplus_i Gr_F^i \cC_2^\dt\ar^{\sim}[r] & K_2 & Fr_*\cC_2^\dt\ar_{\sim}[l]
}
$$
When $(\cC^\dt, F)$ is defined on a variety $X$ over a field of
characteristic zero, a Frobenius splitting will mean a Frobenius splitting of
its pullback to $X_\cL$.

The obvious question is how do Frobenius split complexes arise in nature?
In answer, we propose the
following vague  slogan: {\em Complexes $(\cC^\dt,F)$ arising from the
Hodge theory of varieties in
characteristic zero, with $F$ corresponding to the Hodge filtration,
ought to  be Frobenius split.}  Since the objects of Hodge theory are
usually highly transcendental, we should qualify this by restricting
to complexes of geometric origin.
However, we prefer not to try to make this too precise, but instead to keep it
as guiding principle in the search for interesting examples.
We begin with the basic example due
to  Deligne and Illusie  \cite{deligne-ill}:

\begin{thm}[Deligne-Illusie]\label{thm:di}
  Let $X $ be a smooth variety with a normal crossing divisor
  $D$ defined over a perfect field of characteristic
  $p>\dim X$. Suppose  that $(X,D)$ lifts mod $p^2$. Then there is an isomorphism
$$\sigma_X:\bigoplus_i \Omega_X^i(\log D)[-i]\cong
Fr_*\Omega_X^\dt(\log D)$$
in the derived category which depends canonically on the mod $p^2$ lift of $(X,D)$.
\end{thm}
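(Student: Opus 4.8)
This is the theorem of Deligne and Illusie \cite{deligne-ill}, and the plan is to reproduce their argument with the logarithmic poles along $D$ carried along throughout. Write $F\colon X\to X'$ for the relative Frobenius, with $X'$ the Frobenius twist of $X$ and $D'\subset X'$ the corresponding divisor; since the ground field is perfect one may identify $X'$ with $X$, $D'$ with $D$, and $F$ with $Fr$, so I will work with $F$ and suppress the twist. The two complexes in the statement have the same cohomology sheaves: $\mathcal{H}^i$ of $\bigoplus_i \Omega_X^i(\log D)[-i]$ is $\Omega_X^i(\log D)$ by inspection, while $\mathcal{H}^i(Fr_*\Omega_X^\dt(\log D))\cong \Omega_X^i(\log D)$ by the logarithmic Cartier isomorphism $C^{-1}$. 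Hence it suffices to produce a morphism $\sigma_X$ in the derived category that induces $C^{-1}$ on every cohomology sheaf; since all sheaves in sight are locally free, such a $\sigma_X$ is automatically a quasi-isomorphism, i.e. an isomorphism in the derived category, which is the assertion.

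The construction is local to start. On a small affine open $U\subset X$ the given lift $(\tilde X,\tilde D)$ mod $p^2$ admits a lift $\tilde F$ of the Frobenius compatible with the log structures: the obstruction to lifting $F$ (compatibly with $\tilde D$) lies in a coherent cohomology group that vanishes on an affine, and compatibility is forced anyway because $F$ raises a local equation of $D'$ to its exact $p$-th power. Since $\tilde F$ reduces to $F$, the pullback $\tilde F^*$ of a (logarithmic) $j$-form is divisible by $p^j$; so for a local section $\omega$ of $\Omega_X^1(\log D)$, lifting it to $\tilde\omega$ on $\tilde X'$ and setting $\varphi^1_{\tilde F}(\omega)=\tfrac1p\tilde F^*\tilde\omega$ gives a well-defined $\OO_X$-linear map $\Omega_X^1(\log D)\to Fr_*\Omega_X^1(\log D)$ landing in closed forms, and one checks that modulo exact forms it is precisely $C^{-1}$ in degree one. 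Read as a map of complexes $\Omega_X^1(\log D)[-1]\to Fr_*\Omega_X^\dt(\log D)$, it induces $C^{-1}$ on $\mathcal{H}^1$.

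Next one globalizes and climbs in degree. Two Frobenius lifts $\tilde F_0,\tilde F_1$ over an overlap yield maps $\varphi^1_{\tilde F_0},\varphi^1_{\tilde F_1}$ differing by an explicit chain homotopy $h\colon \Omega_X^1(\log D)\to Fr_*\OO_X$ built from $\tfrac1p(\tilde F_1^*-\tilde F_0^*)$, which is a Frobenius-twisted derivation and hence factors through $\Omega_X^1$; over triple overlaps these homotopies satisfy the cocycle identity. This \v{C}ech datum assembles into a morphism $\Omega_X^1(\log D)[-1]\to Fr_*\Omega_X^\dt(\log D)$ in the derived category depending only on $(\tilde X,\tilde D)$, not on the auxiliary $\tilde F_i$. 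Using the differential graded algebra structure of $Fr_*\Omega_X^\dt(\log D)$ one forms the $i$-th exterior power of this degree-one morphism to obtain $\varphi^i\colon \Omega_X^i(\log D)[-i]\to Fr_*\Omega_X^\dt(\log D)$, and $\sigma_X:=\bigoplus_i\varphi^i$ is the desired map; it induces $C^{-1}$ on each $\mathcal{H}^i$ (degree one by the local computation, higher degrees by multiplicativity of $C^{-1}$) and is therefore a quasi-isomorphism. The hypothesis $p>\dim X$ enters only through $\Omega_X^i(\log D)=0$ for $i\ge p$, so that no truncation at level $p-1$ is required and the decomposition is of the full de Rham complex.

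The step I expect to be the real work is the globalization together with the canonicity claim: one must verify that the $\varphi^1_{\tilde F}$, which genuinely depend on the choice of Frobenius lift, patch to a morphism in the derived category depending only on the mod $p^2$ lift of $(X,D)$, and that forming exterior powers in the derived category is legitimate and still computes the Cartier isomorphism in each degree. The logarithmic bookkeeping — choosing $\tilde F$ compatible with $\tilde D$ and invoking the log version of the Cartier isomorphism — requires some care but introduces nothing essentially new.
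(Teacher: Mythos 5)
The paper does not prove this statement at all---it quotes it from Deligne--Illusie \cite{deligne-ill}---and your proposal is essentially a faithful sketch of that original argument (local Frobenius lifts compatible with the divisor, $\tfrac1p\tilde F^*$ inducing the logarithmic Cartier isomorphism in degree one, \v{C}ech homotopies between different lifts, multiplicative extension to higher degrees, with $p>\dim X$ removing the truncation at $p$), so you are taking the same route as the paper's source. The only slip worth flagging is the parenthetical claim that compatibility of a Frobenius lift with $\tilde D$ is ``forced'': an arbitrary lift of $F$ need not respect the divisor, and one must choose admissible lifts, which exist locally exactly by the vanishing-of-obstruction remark you already make.
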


\begin{cor} If $(X,D)$ is as above, or defined over a field of
  characteristic $0$,
  the logarithmic de Rham complex  $\Omega_X^\dt(\log D)$ with its stupid filtration,
  $F^i=\Omega_X^{\ge i}(\log D)$ is Frobenius split. 
\end{cor}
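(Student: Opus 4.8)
The plan is to deduce the statement directly from Theorem~\ref{thm:di}, after unwinding what a Frobenius splitting of the stupid filtration amounts to and, in characteristic $0$, after spreading $(X,D)$ out. For the stupid filtration $F^i=\Omega_X^{\ge i}(\log D)$ on $\cC^\dt=\Omega_X^\dt(\log D)$ one has $Gr_F^i\cC^\dt=\Omega_X^i(\log D)[-i]$, hence $\bigoplus_i Gr_F^i\cC^\dt=\bigoplus_i\Omega_X^i(\log D)[-i]$; thus a Frobenius splitting of $(\cC^\dt,F)$ is precisely an isomorphism $\bigoplus_i\Omega_X^i(\log D)[-i]\cong Fr_*\Omega_X^\dt(\log D)$ in the derived category, together with a representing diagram of quasi-isomorphisms. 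When $X$ is defined over a perfect field of characteristic $p>\dim X$ and $(X,D)$ lifts mod $p^2$, this is exactly what Theorem~\ref{thm:di} provides, the Deligne--Illusie construction of $\sigma_X$ supplying the diagram of quasi-isomorphisms, so there is nothing further to prove in that case.

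Now let $(X,D)$ be a smooth pair with normal crossing divisor over a field $k$ of characteristic $0$, which we may take inside $\C$. By the convention introduced above, a Frobenius splitting here means one of the pullback of $(\cC^\dt,F)$ to $X_\cL$. Choose a thickening $(\X,\mathcal D)$ of $(X,D)$ over some $A\in A(k)$; enlarging $A$, we may assume that $\X\to\Spec A$ is smooth, that $\mathcal D\to\Spec A$ is a relative normal crossing divisor, and that $A$ is smooth over $\Z[1/M]$ for some $M$. The primes $p$ with $p\nmid M$ and $p>\dim X$ form a cofinite set, hence lie in $\cL$ (the defining condition on $\cL$ forces every cofinite set into $\cL$, by taking finite intersections). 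For each such $p$, $(\X_{m_p},\mathcal D_{m_p})$ is a smooth pair with normal crossing divisor over the perfect field $k(m_p)=A/m_p$, of dimension $<p$, and it lifts mod $p^2$: the formally smooth $\F_p$-algebra $A/pA$ admits a homomorphism to the flat $\Z/p^2\Z$-algebra $W_2(k(m_p))$ lifting $A/pA\to k(m_p)$, and base change of $(\X,\mathcal D)$ along $A\to W_2(k(m_p))$ gives the required lift (this standard point about thickenings is discussed in \cite{arapura,deligne-ill}). Theorem~\ref{thm:di} then yields, for almost all $p$ with respect to $\cL$, a diagram of quasi-isomorphisms
$$\bigoplus_i\Omega^i_{\X_{m_p}}(\log\mathcal D_{m_p})[-i]\;\lblarr{\sigma_{1,p}}\;K_p^\dt\;\lblarr{\sigma_{2,p}}\;Fr_*\Omega^\dt_{\X_{m_p}}(\log\mathcal D_{m_p}),$$
and for the finitely many remaining $p$ we set $K_p^\dt=0$.

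Finally, pass to ultraproducts over $\cL$. The log differentials of a smooth pair and their exterior powers are locally free and commute with base change, so the pullback to $X_\cL$ of the complex $\cC^\dt$ is identified with $\bigvee_p\Omega^\dt_{\X_{m_p}}(\log\mathcal D_{m_p})/\cL$ and, since the Frobenius of $X_\cL$ is assembled from the $Fr$'s on the components, its $Fr_*$ is $\bigvee_p Fr_*\Omega^\dt_{\X_{m_p}}(\log\mathcal D_{m_p})/\cL$; similarly $\bigoplus_i Gr_F^i$ of the pullback is $\bigvee_p\bigoplus_i\Omega^i_{\X_{m_p}}(\log\mathcal D_{m_p})[-i]/\cL$. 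Put $K^\dt=\bigvee_p K_p^\dt/\cL$; then the $\sigma_{j,p}$ assemble into maps $\sigma_1,\sigma_2$ on $X_\cL$, and these are quasi-isomorphisms, as one checks on an affine open cover: on each member $\Spec(\prod_p R_{j,p}/\cL)$ of the cover the complexes of modules involved and the maps between them are the ultraproducts of their approximations, so the vanishing of the cohomology of the mapping cone transfers from almost all approximations by {\L}os's theorem (cf.\ the argument of Lemma~\ref{lemma:Hiultra}). This exhibits a Frobenius splitting of the pullback of $(\Omega_X^\dt(\log D),F)$ to $X_\cL$, as required.

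The only serious input is Theorem~\ref{thm:di}; the rest is spreading-out and ultraproduct bookkeeping. The point needing the most care is the compatibility of the pullback to $X_\cL$ with the fibrewise constructions --- base change for the log de Rham complex and commutation of $Fr_*$ with the ultraproduct --- together with the mod $p^2$ liftability of the reductions of the thickening, which is standard but should be invoked explicitly.
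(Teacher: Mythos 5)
Your argument is correct and is essentially the paper's: the corollary is stated there without proof, being immediate from Theorem~\ref{thm:di} in characteristic $p$ and, in characteristic $0$, from the convention that a splitting means one of the pullback to $X_\cL$, obtained by spreading out and applying Theorem~\ref{thm:di} to almost all closed fibres of a thickening --- exactly the argument the paper later writes out for the Du Bois complex in Theorem~\ref{thm:dubois}. One small slip in your parenthetical justification: there is no ring homomorphism from the characteristic-$p$ algebra $A/pA$ to the flat $\Z/p^2\Z$-algebra $W_2(k(m_p))$; the mod $p^2$ lift comes from lifting $A\to k(m_p)$ to $A\to W_2(k(m_p))$ by formal smoothness of $A$ over $\Z[1/M]$ and then base changing $(\X,\mathcal{D})$ along this map, which is clearly what you intend.
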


The functoriallity statement given in the theorem is not good enough
for our purposes. 
 The isomorphism $\sigma_X$ is realised  explicitly as a map $\tilde
\sigma_X$ from $\oplus \Omega_X^i(\log D)[-i]$ to a sheafified \v{C}ech complex
$\check{\mathcal C}(\{U_j\},Fr_*\Omega_X^\dt(\log D))$ 
with respect to an affine open cover of $X$. In addition to
the cover, it depends on 
mod $p^2$ lift of $(X,D)$ and mod $p^2$ lifts of $Fr|_{U_j}$. It is clear
that given any morphism $f:X_1\to X_2$, with $D_1\supseteq f^{-1}D_2$, which lifts mod $p^2$,
that compatible choices can be made. Then from the formulas in
\cite{deligne-ill}, we see that we get a morphism of Frobenius split complexes
extending the natural map $\Omega_{X_2}^\dt(\log D_2)\to f^*\Omega_{X_1}^\dt(\log
D_1)$.

An additional example of a Frobenius split complex, consistent with the
earlier principle, is provided by a theorem
of Ilusie \cite[4.7]{illusie} which implies:

\begin{prop}
  Let $f:X\to Y$ be a proper semistable map with discriminant $E\subset
  Y$ defined over a  field $k$ of characteristic $p\gg 0$ which lifts
  mod $p^2$. Let
  $H=R^if_*\Omega^\dt_{X/Y}(\log D)$ be a Hodge bundle with  filtration $F^j=
  R^if_*\Omega^{\ge j}_{X/Y}(\log D)$, where $D=f^{-1}E$. If $\nabla$
  denotes the Gauss-Manin connection, then the complex
$$H\stackrel{\nabla}{\to} \Omega_X^1(\log E)\otimes
H\stackrel{\nabla}{\to}\Omega_X^2(\log E)\otimes
H\stackrel{\nabla}{\to}\ldots $$
with filtration
$$F^j\stackrel{\nabla}{\to} \Omega_X^1(\log E)\otimes
F^{j-1}\stackrel{\nabla}{\to}\ldots$$
is Frobenius split.
\end{prop}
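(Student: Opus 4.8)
The plan is to read the proposition off Illusie's decomposition theorem \cite[4.7]{illusie}, with little more than a translation of notation. Write $K^\dt=(\Omega_Y^\dt(\log E)\otimes H,\nabla)$ for the complex in the statement, $H$ placed in degree $0$, and $F^jK^\dt=(F^j\to \Omega_Y^1(\log E)\otimes F^{j-1}\to\cdots)$ for the given filtration; Griffiths transversality $\nabla F^j\subseteq \Omega_Y^1(\log E)\otimes F^{j-1}$ for the Gauss--Manin connection is exactly what makes each $F^jK^\dt$ a subcomplex, and it identifies $Gr_F^jK^\dt$ with the complex
$$Gr_F^jH\to \Omega_Y^1(\log E)\otimes Gr_F^{j-1}H\to \Omega_Y^2(\log E)\otimes Gr_F^{j-2}H\to\cdots$$
having $\OO_Y$-linear (Kodaira--Spencer type) differentials. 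One then applies \cite[4.7]{illusie} to the proper log smooth morphism $(X,D)\to(Y,E)$: by hypothesis it lifts mod $p^2$, and for $p\gg 0$ the sheaves $H=R^if_*\Omega^\dt_{X/Y}(\log D)$ and $F^jH=R^if_*\Omega^{\ge j}_{X/Y}(\log D)$ are locally free with formation commuting with base change (this is true in characteristic zero, hence for almost all reductions). Illusie's theorem then furnishes a canonical isomorphism in the derived category of $\OO_Y$-modules
$$\bigoplus_j Gr_F^j K^\dt\;\cong\; Fr_*K^\dt,$$
with $Fr$ the Frobenius of $Y$ --- replacing $k$ by its perfect closure if necessary, so that relative and absolute Frobenius may be identified, as is tacit in Theorem~\ref{thm:di}. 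This is precisely an isomorphism of the kind occurring in the definition of a Frobenius splitting, so $(K^\dt,F^\dt)$ is Frobenius split.

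If desired, Illusie's isomorphism can be unwound, following \cite{deligne-ill}, into an explicit zigzag of quasi-isomorphisms $\bigoplus_j Gr_F^jK^\dt\to L^\dt\leftarrow Fr_*K^\dt$ built from a sheafified \v{C}ech complex for an affine cover of $Y$ together with local mod $p^2$ liftings of the data and of Frobenius. But the bare derived-category isomorphism already satisfies our definition, and --- in contrast to the Deligne--Illusie example above --- no functoriality of the splitting is needed here, which removes the only genuinely delicate point one might otherwise have to address.

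The one thing that really requires care is the bookkeeping that \cite[4.7]{illusie} concerns exactly the filtered complex $(K^\dt,F^\dt)$ in the form written here: one must match the shift conventions (these are absorbed into the symbol $Gr_F^j$, just as $Gr^i_{\mathrm{stupid}}\Omega_X^\dt(\log D)=\Omega_X^i(\log D)[-i]$ in the corollary to Theorem~\ref{thm:di}), identify the de Rham complex of the filtered vector bundle with connection $(H,F,\nabla)$ with Illusie's de Rham complex with coefficients, and check that ``$p\gg 0$'' in the proposition subsumes Illusie's numerical hypothesis on $p$ (roughly, $p$ larger than the dimensions of the fibres of $f$) as well as the local freeness and base-change properties used above. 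I expect this matching, rather than any substantive new argument, to be the main obstacle; once it is in place the proposition is immediate.
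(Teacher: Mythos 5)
Your proposal is correct and matches the paper, which offers no argument beyond asserting that the proposition is implied by Illusie's theorem \cite[4.7]{illusie}; your translation of notation (Griffiths transversality, graded pieces, $p\gg 0$ hypotheses, local freeness and base change) just makes explicit the bookkeeping the paper leaves tacit.
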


Further examples of Frobenius split complexes can be built from simpler
pieces using mappling cones. More generally, given a bounded complex 
$$(\cC^{\dt,0},F)\to (\cC^{\dt,1}, F)\to \ldots $$
of Frobenius split complexes, we can form the total complex
$$Tot(\cC^{\dt,\dt})^i = \bigoplus_{j+k=i} \cC^{jk}$$
in the usual way with filtration
$$F^pTot(\cC^{\dt,\dt})^i =\bigoplus_{j+k=i} F^{p}\cC^{jk} $$
Together with the diagram
$$Tot(\bigoplus Gr^\dt(\cC^{\dt\dt}))\to Tot(K^{\dt\dt})\leftarrow Fr_*Tot(\cC^{\dt\dt})$$
this becomes a Frobenius split complex.

Let us call a filtered complex $(\cC^\dt, F)$ coherent if it is a bounded
complex of quasi-coherent sheaves such that 
 the differentials are differential operators and $Gr_F\cC^\dt$ is quasi-isomorphic
to a complex of coherent sheaves with $\OO_X$-linear differentials.
For example, $(\Omega_X^\dt(\log D),  \Omega_X^{\ge i}(\log D))$ is
coherent. A slight refinement of the arguments of Deligne and Illusie yields:

\begin{thm}\label{thm:van0}
Let $X$ be complete variety in positive characteristic
 or the ultra-fibre of a complete variety in characteristic zero.
Suppose that $(\cC^\dt, F)$ is a coherent Frobenius split complex on $X$. Then
\begin{enumerate}
\item The spectral sequence
$$E_1^{ij}= H^{i+j}(X,Gr_F^i\cC^\dt) \Rightarrow H^{i+j}(X,\cC^\dt)$$
degenerates at $E_1$
\item For any bounded complex of
 locally free sheaves $\F^\dt$, 
$$H^i(X, Gr_F^j \cC^\dt\otimes \F^\dt) = 0$$ 
for any $j$ and $i> m + \phi(\F^\dt)$, 
where $m=\max\{\iota \mid \mathcal{H}^\iota(Gr_F^j\cC^\dt)\not= 0\}$.
\end{enumerate}
\end{thm}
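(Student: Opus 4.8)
The plan is to mimic the Deligne--Illusie argument for the degeneration of the Hodge--de Rham spectral sequence, using the Frobenius splitting $\sigma$ in place of the Cartier isomorphism, and then to bootstrap the vanishing statement (2) from the degeneration (1) together with the defining property of $\phi(\F^\dt)$.

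For part (1), the key observation is that the Frobenius splitting gives, in the derived category of coherent sheaves on $X$,
$$
Fr_*\cC^\dt \cong \bigoplus_i Gr_F^i\cC^\dt[-i],
$$
where the shifts record the filtration degree (this is how a filtered quasi-isomorphism from the associated graded behaves). Since $Fr$ is affine, $H^n(X,\cC^\dt)=H^n(X,Fr_*\cC^\dt)$, so one gets
$$
\dim_{k} H^n(X,\cC^\dt) = \sum_{i} \dim_{k} H^{n-i}(X,Gr_F^i\cC^\dt),
$$
where in the ultra-fibre case these are generalized dimensions in $\prod\N/\U$ but the identity still holds termwise by Lemma~\ref{lemma:Hiultra}. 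This is exactly the inequality-becomes-equality criterion for $E_1$-degeneration of the spectral sequence $E_1^{ij}=H^{i+j}(X,Gr_F^i\cC^\dt)\Rightarrow H^{i+j}(X,\cC^\dt)$: the $E_\infty$ dimensions always sum to at most the $E_1$ dimensions, and here they are equal, forcing all higher differentials to vanish. One caveat to handle: in the characteristic-zero (ultra-fibre) case the cohomology groups may be infinite dimensional over $k(\U)$, so I would run the dimension count in $\prod\N/\U$, or equivalently reduce to the characteristic-$p$ fibres via Lemma~\ref{lemma:Hiultra} and \L os, where everything is genuinely finite-dimensional.

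For part (2), I would twist the whole construction by $\F^\dt$. The point is that $Gr_F^j\cC^\dt$ is, up to quasi-isomorphism, a coherent complex with $\OO_X$-linear differentials (the coherence hypothesis), so $Gr_F^j\cC^\dt\otimes\F^\dt$ makes sense as a bounded coherent complex. Applying the splitting and the projection formula for the affine map $Fr^{N}$ (more precisely $\mathbb{L}Fr^{N*}$ on the singular/ultra-fibre side, which is why $\phi$ was defined with derived functors), one relates $H^i(X,Gr_F^j\cC^\dt\otimes\F^\dt)$ to cohomology of $\mathbb{L}Fr^{N*}(\F^\dt)\otimes^{\mathbb{L}}\E$ for a suitable coherent $\E$ built out of $\cC^\dt$ and its filtration. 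The iteration of $Fr$ is what buys the extra vanishing: after degeneration (part (1) applied not just to $\cC^\dt$ but to the twist), one is reduced to showing $H^i$ of a Frobenius-twisted complex vanishes, and this is precisely what $\phi(\F^\dt)$ controls once $N\gg 0$, yielding the bound $i>m+\phi(\F^\dt)$ with $m$ the top cohomological degree of $Gr_F^j\cC^\dt$ (the $m$ appears because $\F^\dt$ is a complex and the hypercohomology spectral sequence of $Gr_F^j\cC^\dt\otimes\F^\dt$ has a range of length governed by $m$).

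The main obstacle, I expect, is the bookkeeping needed to make the twist-by-$\F^\dt$ compatible with the splitting: the splitting $\sigma$ is a priori only a derived-category isomorphism, and one must check that tensoring with the locally free complex $\F^\dt$ and then applying $\mathbb{L}Fr^{N*}$ preserves enough structure to run the degeneration argument at each Frobenius stage. Concretely, one needs that iterating the splitting $N$ times (using the canonical functoriality in the mod-$p^2$ lift, or on the ultra-fibre the assembled Frobenius) still produces a coherent Frobenius-split complex after the twist, so that part (1) applies to it; this is where the coherence hypothesis on $(\cC^\dt,F)$ and the care about derived pullback on singular $X$ are doing real work. The passage between the characteristic-$p$ statement and the ultra-fibre statement should be routine given Lemmas~\ref{lemma:Hiultra} and~\ref{lemma:ultracoh} and \L os's theorem, so I would prove the positive-characteristic case first and then transfer.
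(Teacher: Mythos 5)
Your part (1) is exactly the paper's argument: affineness of $Fr$ gives $\mathbb{R}Fr_*\cC^\dt=Fr_*\cC^\dt$, so the splitting yields $H^n(X,\cC^\dt)\cong\bigoplus_i H^n(X,Gr_F^i\cC^\dt)$, and equality of dimensions (read in $\prod\N/\U$ in the ultra-fibre case, or checked fibrewise via Lemma \ref{lemma:Hiultra}) forces $E_1=E_\infty$ since $E_\infty$ is a subquotient of $E_1$. The first half of your part (2) also matches the paper: the definition of $\phi(\F^\dt)$ plus the hypercohomology spectral sequence for $Gr_F^j\cC^\dt$ gives $H^i(X,Gr_F^j\cC^\dt\otimes Fr^{N*}\F^\dt)=0$ for $i>m+\phi(\F^\dt)$ and $N\gg0$.

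The one place your plan drifts is the descent from $Fr^{N*}\F^\dt$ back to $\F^\dt$: you propose to check that the twist of $(\cC^\dt,F)$ by $\F^\dt$ is again a coherent Frobenius split complex so that part (1) can be applied to it, and you flag this as the main obstacle. That check is both unnecessary and, as stated, not quite right: the splitting of $\cC^\dt$ together with the projection formula gives $Fr_*(\cC^\dt\otimes Fr^*\mathcal{G}^\dt)\cong(Fr_*\cC^\dt)\otimes\mathcal{G}^\dt\cong\bigoplus_j Gr_F^j\cC^\dt\otimes\mathcal{G}^\dt$, which compares the $Fr^*\mathcal{G}^\dt$-twist with the graded pieces twisted by $\mathcal{G}^\dt$ itself, not with the graded of the same filtered complex, so the twisted complex is not Frobenius split in the paper's sense. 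The paper instead uses a one-line sublemma: if $H^i(X,Gr_F^j\cC^\dt\otimes Fr^*\mathcal{G}^\dt)=0$ for all $j$, then the filtration spectral sequence (mere convergence, no degeneration needed) gives $H^i(X,\cC^\dt\otimes Fr^*\mathcal{G}^\dt)=0$, and the displayed isomorphism identifies this group with $\bigoplus_j H^i(X,Gr_F^j\cC^\dt\otimes\mathcal{G}^\dt)$, so each summand vanishes. Taking $\mathcal{G}^\dt=Fr^{(N-1)*}\F^\dt$, which is again a bounded complex of locally free sheaves, and inducting downward on $N$ converts your vanishing at $N\gg0$ into the statement at $N=0$; only the single splitting of $\cC^\dt$ is ever used. (A minor convention point: in the paper the splitting reads $\bigoplus_i Gr_F^i\cC^\dt\cong Fr_*\cC^\dt$ with the shifts already built into $Gr_F^i\cC^\dt$, so the extra $[-i]$ you inserted is harmless but redundant.)
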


\begin{proof}
Since $E_\infty$ is a subquotient of $E_1$, to prove  $E_1\cong
E_\infty$ it is enough to prove equality of dimensions.
 The morphism $Fr$ is affine, by definition in the first case and
 because it is an ultraproduct of affine
 morphisms in the second. 
Therefore $R^iFr_*Gr_F^j\cC^\dt=0$ for $i>0$, which implies  
$\R  Fr_*\cC^\dt= Fr_*\cC^\dt$. Thus 
$$H^j(X,\cC^\dt) \cong H^j(X,Fr_*\cC^\dt) \cong \bigoplus_i
H^{j}(X,Gr_F^i\cC^\dt)$$
which forces $\dim E_1=\dim E_\infty$ and proves (1).

  By definition of $\phi$
$$H^i(X, \mathcal{H}^k(Gr_F^j\cC^\dt)\otimes Fr^{N*}\F^\dt) = 0$$
for $i>\phi(\F^\dt)$, all $j$ and $N\gg 0$. So by a standard spectral
sequence argument,
$$H^i(X, Gr_F^j\cC^\dt\otimes Fr^{N*}\F^\dt) = 0$$
for $i> m + \phi(\F^\dt)$ and $N\gg 0$.
So (2) is consequence of the sublemma:

If $H^i(X, Gr_F^j\cC^\dt\otimes Fr^{*}\F^\dt) = 0$ for all $j$,
then  $H^i(X, Gr_F^j\cC^\dt\otimes \F^\dt) = 0$ for all $j$.

\begin{proof}
  The assumption forces $H^i(X, \cC^\dt\otimes Fr^{*}\F^\dt) = 0$.
On the other hand, the projection
formula and existence of a Frobenius splitting implies
  $$H^i(X,\cC^\dt\otimes Fr^*\F^\dt) \cong
  H^i(X,(Fr_*\cC^\dt)\otimes \F^\dt ) \cong \bigoplus_j
H^{i}(X,Gr_F^j\cC^\dt\otimes \F^\dt)$$
\end{proof}
This concludes the proof of the theorem.
\end{proof}

From this we recover the key degeneration of spectral sequence and
 vanishing theorems of 
\cite{deligne-ill}, \cite{illusie}, \cite{arapura} and \cite{ara2}

\section{Splitting of  the Du Bois complex}

Our goal is to prove a general Akizuki-Nakano-Kodaira type vanishing theorem for singular
varieties. The right replacement for differential forms in the Hodge
theory of such spaces was found by
Du Bois \cite{dubois}.
Given a complex algebraic variety $X$, Du Bois
 constructed a filtered complex $(\ddB_X^\dt, F)$ of sheaves, such that
\begin{enumerate}
\item The complex is unique up to filtered quasi-isomorphism. In other
  words, it is well defined in the filtered derived category $DF(X)$
\item There exists a map of complexes from the de Rham complex
with the stupid filtration $(\Omega_X^\dt,\Omega_X^{\ge p})$ to $(\ddB^\dt_X,F^p)$.
This is a filtered quasi-isomorphism when $X$ is smooth.
\item The complexes $\dB_X^i=Gr^i_F\ddB_X^\dt[i]$ give  well defined
  objects in the bounded derived category of coherent sheaves $D_{coh}^b(\OO_X)$ . (The
  shift is chosen so that $\dB_X^i=\Omega_X^i$ when $X$ is smooth.)

\item The associated analytic sheaves $\ddB_X^{\dt,an}$ resolve $\C$.
When $X$ is complete, the spectral sequence 
\begin{equation*}
E_1^{ab}= H^b(X,\dB_X^a)\Rightarrow H^{a+b}(X^{an},\C)  
\end{equation*}
degenerates at $E_1$ and abuts to the Hodge filtration for the
canonical mixed Hodge structure on the right.
\end{enumerate}
This can be refined  for pairs \cite[\S 6]{dubois}. If $Z\subset X$ is
a closed set with dense complement, there exists a filtered complex 
$(\ddB_X(\log Z), F)\in DF(X)$ 
such that $\dB_X^i(\log Z) = Gr_F^i\ddB_X(\log Z)[i]\in
D_{coh}^b(\OO_X)$ and there is a spectral sequence
\begin{equation}
  \label{eq:hodge2dubois2}
E_1^{ab}= H^b(X,\dB_X^a(\log D))\Rightarrow H^{a+b}((X-Z)^{an},\C)  
\end{equation}
which degenerates when $X$ is complete.

At the heart of the construction is cohomological descent (cf
\cite{deligne,gnpp,ps}), which is a refinement of \v{C}ech theory.
  Using resolution of singularities  one can construct  a diagram
$$
\xymatrix{
 \ldots\ar@<1ex>[r] \ar@<-1ex>[r]\ar[r] & X_1\ar@<1ex>[r]^{\delta_0}\ar[r]_{\delta_1} & X_0\ar[r] & X
}
$$
such that  $X_i$ are smooth,  the usual simplicial identities hold,
and cohomological descent is satisfied.
The last condition means that the
cohomology of any sheaf $\F$ on $X$ can be computed on $X_\dt$ as follows.
A simplicial sheaf is a collection of sheaves
$\F_i$ on $X_i$ with maps $\delta_j^*\F_i\to \F_{i+1}$. We define
$\Gamma(X_\dt,\F_\dt) =\ker[\delta_0^*-\delta_1^*:\Gamma(\F_0)\to \Gamma(\F_1)]$
and $H^i(X_\dt,\F_\dt) = R^i\Gamma(X_\dt,\F_\dt)$. If 
$\F_\dt$ is replaced by a  resolution by injective simplicial sheaves
$\mathcal{I}^\dt_{\dt}$ then $ H^i(X_\dt,\F_\dt)$
is just  the cohomology of the total complex 
$$ Tot(\Gamma(\mathcal{I}^\dt_{0})\to \Gamma(\mathcal{I}^\dt_{1})\to\ldots )$$
The pullback of $\F$ gives a simplicial sheaf $\F_\dt$ on $X_\dt$, and the
descent condition requires that $H^i(X,\F) \cong H^i(X_\dt,\F_\dt)$.
It is  important for our purposes to note that the diagram
$X_\dt$ can be assumed finite, in fact with the bound $\dim X_i\le
\dim X-i$, thanks to  \cite{gnpp}. Also if a proper closed set $Z\subset X$
is given, then one can construct a simplicial resolution such
that preimage  $Z_\dt$ of $Z$ on each $X_i$ is essentially a union of a divisor
with normal crossings (see \cite[5.21]{ps} for the precise conditions).

We recall the construction of Du Bois's complex. Choose a
smooth simplicial scheme $f_\dt:X_\dt\to X$ as above. Then
$(\Omega_{X_\dt}^\dt,\Omega_{X_\dt}^{\ge \dt})$ gives a filtered complex of simplicial sheaves on $X_\dt$.
By modifying the procedure for defining cohomology described above, 
we can form higher direct images for such objects. One then sets
\begin{equation}
  \label{eq:dubois}
(\ddB_X^\dt,F^{\dt}) = \R f_{\dt*}(\Omega_{X_\dt}^\dt, \Omega_{X_\dt}^{\ge \dt})  
\end{equation}
and in the ``log'' case
\begin{equation}
  \label{eq:dubois2}
(\ddB_X^\dt(\log Z),F^{\dt}) = \R f_{\dt*}(\Omega_{X_\dt}^\dt(\log
f_{\dt}^{-1}Z),\ldots )
\end{equation}
It follows that
$$ \dB_X^i = \R f_{\dt*}\Omega_{X_\dt}^i=Tot(
\R f_{0*}\Omega_{X_0}^i\to\R f_{1*}\Omega_{X_1}^i\to\ldots)  $$
In particular, from Grauert-Riemenschneider's vanishing theorem and
the dimension bound, we get an
elementary description of the top level
$$\dB_X^n = f_{0*}\Omega_{X_0}^n,\quad n=\dim X$$
In fact, this formula holds when $f_0$ is replaced by a resolution of
singularities \cite[p 153]{gnpp}.

In positive characteristic, de Jong's results \cite{dejong} 
on smooth alterations
can be used to construct a smooth simplicial scheme $X_\dt\to X$
satisfying descent. However, this is not good enough to guarantee a well
defined Du Bois complex. 
In our case, we can avoid these problems by applying \eqref{eq:dubois}
and \eqref{eq:dubois2}  to the mod $p\gg 0$ fibres of a thickening 
$\X_\dt\to \X\supset \mathcal{Z}$ of a simplicial resolution of complex varieties.
Equivalently, we can work with the ultra-fibres $X_{\dt,\U}\to
X_\U\supset Z_\U$. The following is suggested by the principle
enunciated in the last section.

\begin{thm}\label{thm:dubois}
 If $X$ is defined over a field of characteristic $0$, then
$(\ddB_X^\dt(\log Z),F) $ is Frobenius split.
\end{thm}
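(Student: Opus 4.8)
The plan is to reduce the statement, via the construction \eqref{eq:dubois2} and the passage to ultra-fibres described just before the theorem, to the Deligne--Illusie splitting (Theorem~\ref{thm:di}) applied termwise on a simplicial resolution, and then to assemble these termwise splittings into a splitting of the total complex. Concretely, fix a complex variety $X$ with closed $Z\subsetneq X$ and choose a \emph{finite} smooth simplicial resolution $f_\dt:X_\dt\to X$ with $Z_\dt=f_\dt^{-1}Z$ a normal crossing divisor on each $X_i$, and with the dimension bound $\dim X_i\le \dim X-i$; spread this whole picture out to a thickening $\X_\dt\to\X\supseteq\mathcal Z$ over some $A\in A(k)$, shrinking $\Spec A$ so that each $\X_{i}\to\Spec A$ is smooth, each $\mathcal Z_i$ is relatively normal crossing, and the pair lifts mod $p^2$ on each closed fibre (since everything is of finite type and smoothness/normal-crossing are open conditions, all of this holds for $m_p$ in a set belonging to $\cL$). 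Passing to ultra-fibres, we obtain a finite smooth simplicial scheme $X_{\dt,\U}\to X_\U$ over $k(\U)$ with normal crossing $Z_{\dt,\U}$, each component carrying a Frobenius morphism.

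Next I would invoke Theorem~\ref{thm:di} on each $X_{i,\U}$ (legitimate because each mod-$p$ fibre $\X_{i,m_p}$ is smooth over a perfect field with $p>\dim X_i$ and admits the mod $p^2$ lift $\X_i\otimes A/m_p^2$, and because $F$-splittings of the components assemble into an $F$-splitting of an ultra-fibre): this gives, for each $i$, an isomorphism $\sigma_{X_{i,\U}}:\bigoplus_a\Omega^a_{X_{i,\U}}(\log Z_{i,\U})[-a]\cong Fr_*\Omega^\dt_{X_{i,\U}}(\log Z_{i,\U})$ in $D(X_{i,\U})$. The point of the remarks following the Deligne--Illusie corollary is precisely that these $\sigma$'s are functorial for morphisms lifting mod $p^2$ that respect the log divisors; the face maps $\delta_j$ of the simplicial scheme are such morphisms (they lift because the whole simplicial thickening does), so the $\sigma_{X_{i,\U}}$ are compatible with the cosimplicial structure. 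One can therefore apply $\R f_{\dt*}$ (which, as in \eqref{eq:dubois}, is computed as the total complex of $\R f_{i*}$ along the simplicial direction, a \emph{finite} process by the dimension bound) and obtain, using that $Fr$ is affine hence commutes with $\R f_{\dt*}$ in the sense $\R f_{\dt*}Fr_*=Fr_*\R f_{\dt*}$, an isomorphism $\bigoplus_i \dB^i_{X_\U}(\log Z_\U)[-i]=\bigoplus_i Gr^i_F\ddB_{X_\U}(\log Z_\U)\cong Fr_*\ddB_{X_\U}^\dt(\log Z_\U)$ in the derived category. To produce an honest representative as a diagram of quasi-isomorphisms through some $K^\dt$ — as the definition of ``Frobenius split'' demands — I would use the explicit sheafified \v Cech model $\tilde\sigma$ of Deligne--Illusie on each component relative to a compatible system of affine covers and compatible mod $p^2$ lifts of the local Frobenii, and then take the total complex over the simplicial direction of the resulting termwise diagrams; the ``total complex of Frobenius split complexes is Frobenius split'' construction recorded in Section~3 is exactly the bookkeeping needed here.

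The main obstacle is the coherence of all the auxiliary choices across the simplicial diagram: one must choose the affine covers, the mod $p^2$ lifts of $(X_i,Z_i)$, and the mod $p^2$ lifts of the local Frobenii on each $X_i$ \emph{compatibly with the face and degeneracy maps}, so that the explicit maps $\tilde\sigma_{X_{i,\U}}$ really do commute with $\delta_j^*$ on the nose (not merely up to homotopy) — otherwise $\R f_{\dt*}$ cannot be applied to a genuine morphism of complexes of simplicial sheaves. Since the simplicial scheme is finite and everything is spread over a finitely generated base, such compatible choices can be made inductively in simplicial degree (exactly as in the functoriality discussion after the Deligne--Illusie corollary, iterated), and any failure to commute on a measure-zero set of primes is absorbed into $\cL$ via {\L}os's theorem; but verifying the cosimplicial identities for the lifts is the technical heart of the argument. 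A secondary point to check is independence of the resulting $F$-split complex, in the filtered derived category, of the choice of simplicial resolution and of the thickening — this follows as in Du Bois's original uniqueness argument, comparing two resolutions through a common refinement, since the comparison maps are again morphisms of the type to which Deligne--Illusie functoriality applies.
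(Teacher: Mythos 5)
Your proposal is correct and follows essentially the same route as the paper: spread out the simplicial resolution, apply the Deligne--Illusie splitting (Theorem~\ref{thm:di}) termwise on the mod~$p$ (equivalently ultra-) fibres using the functoriality remarks to get an isomorphism of simplicial sheaves, then apply $\R f_{\dt*}$ and commute it with $Fr_*$ since Frobenius is affine. The compatibility-of-choices issue you flag is exactly what the paper's remarks following Theorem~\ref{thm:di} are invoked to handle, so your extra discussion is an elaboration rather than a departure.
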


\begin{proof}
We have to show that
 $$\bigoplus_i \dB_{\X_p}^i(\log \mathcal{Z}_p)[-i]\cong Fr_*\ddB_{\X_p}^\dt(\log  \mathcal{Z}_p)$$
 for $p\gg 0$.
For $p$  large, $f:\X_{\dt,p}\to \X_p$ is a smooth simplicial scheme.
Then from    theorem \ref{thm:di} and the remarks following it, we
obtain an isomorphism 
$$\bigoplus_i \Omega_{\X_{\dt,p}}^i(\log\mathcal{Z}_p)[-i]\cong
Fr_*\Omega_{\X_{\dt,p}}^\dt(\log\mathcal{Z}_p)$$
of simplicial sheaves. Therefore
\begin{eqnarray*}
  \bigoplus_i \dB_{\X_p}^i(\log\mathcal{Z}_p)[-i] 
   &\cong& \bigoplus_i \R f_{\dt*}\Omega_{\X_{\dt,p}}^i(\log\mathcal{Z}_p)[-i]\\
   &\cong&   \R f_{\dt*} Fr_*\Omega_{\X_p}^\dt(\log\mathcal{Z}_p)\\
   &\cong&    Fr_* \R f_{\dt*}\Omega_{\X_p}^\dt(\log\mathcal{Z}_p)\\
    &\cong& Fr_*\ddB_{\X_p}^\dt(\log\mathcal{Z}_p)
\end{eqnarray*}

\end{proof}

As a corolloray we can reprove Du Bois' result.

\begin{cor}
  When $X$ is complete the spectral sequence (\ref{eq:hodge2dubois2}) 
degenerates.

\end{cor}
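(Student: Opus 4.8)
The plan is to realize (\ref{eq:hodge2dubois2}) as the hypercohomology spectral sequence of the filtered complex $(\ddB_X^\dt(\log Z),F)$ and to deduce its degeneration from Theorem \ref{thm:van0} applied on the ultra-fibre, where Theorem \ref{thm:dubois} supplies a Frobenius splitting. The first step is purely formal: $E_\infty$ is a subquotient of $E_1$, the abutment is finite dimensional, and its limit filtration has associated graded of total dimension equal to that of the abutment; hence degeneration at $E_1$ is equivalent to the numerical identity
$$\sum_{a+b=n}\dim H^b(X,\dB_X^a(\log Z)) \;=\; \dim H^n(X,\ddB_X^\dt(\log Z))$$
for every $n$. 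This is a relation between dimensions of hypercohomology of bounded complexes of coherent sheaves (the complexes $\dB_X^a(\log Z)\in D_{coh}^b(\OO_X)$ and $\ddB_X^\dt(\log Z)$) on the proper $k$-scheme $X$, so both sides are unchanged under any extension of the base field.

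Next I would spread everything out over a finitely generated ring. We may assume $k\subseteq\C$ and, after shrinking $k$ to a finitely generated subfield, fix $A\in A(k)$ together with a thickening $\X_\dt\to\X\supseteq\mathcal{Z}$ of a simplicial resolution as in the proof of Theorem \ref{thm:dubois}. After inverting finitely many elements of $A$, the construction from $\X_\dt$ of the Du Bois complex $\ddB_\X^\dt(\log\mathcal{Z})$ and of its graded pieces $\dB_\X^a(\log\mathcal{Z})$ commutes with arbitrary base change. Moreover, representing these by bounded complexes of $A$-flat coherent sheaves and applying the semicontinuity theorem (as in the base-change lemma preceding Corollary \ref{cor:standcohomfinite}), the groups $H^b(\X,\dB_\X^a(\log\mathcal{Z}))$ and $H^n(\X,\ddB_\X^\dt(\log\mathcal{Z}))$ are free $A$-modules commuting with all base changes. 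Base changing along $K(A)\hookrightarrow k$ recovers the groups over $k$; base changing along $K(A)\to k(\U)$ yields, via Lemma \ref{lemma:Hiultra}, the corresponding groups on the ultra-fibre $X_\U$ for any pseudo-generic ultrafilter $\U\supseteq\cL$. In particular the dimensions over $k$ and over $k(\U)\cong\C$ coincide, so it suffices to prove the identity over $k(\U)$.

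On $X_\U$ the identity is immediate. The complex $(\ddB_{X_\U}^\dt(\log Z_\U),F)$ is a coherent filtered complex in the sense required by Theorem \ref{thm:van0}: a suitable representative has differentials which are de Rham-type differential operators, and $Gr_F\ddB_{X_\U}^\dt(\log Z_\U)\cong\bigoplus_a\dB_{X_\U}^a(\log Z_\U)[-a]$ is represented by a complex of coherent sheaves with $\OO$-linear differentials. By Theorem \ref{thm:dubois} it is Frobenius split, and $X_\U$ is the ultra-fibre of the complete variety $X$; hence Theorem \ref{thm:van0}(1) applies and the spectral sequence $E_1^{ab}=H^b(X_\U,\dB_{X_\U}^a(\log Z_\U))\Rightarrow H^{a+b}(X_\U,\ddB_{X_\U}^\dt(\log Z_\U))$ degenerates at $E_1$. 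That is precisely the numerical identity over $k(\U)$, which by the previous paragraph gives it over $k$; therefore (\ref{eq:hodge2dubois2}) degenerates.

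The one substantive point is the spreading-out in the middle step: one must verify that the Du Bois complex of the pair $(X,Z)$, constructed as a derived pushforward from a simplicial resolution, together with all of its hypercohomology, genuinely commutes with base change after shrinking $\Spec A$, so that the mod $p$ fibres, and hence the ultra-fibre, compute the same dimensions as the complex over $k$. This is the same kind of argument already made for Theorem \ref{thm:dubois}; granting it, everything else is a formal consequence of Theorems \ref{thm:dubois} and \ref{thm:van0} together with the elementary fact that $E_1$-degeneration is a statement about dimensions of coherent cohomology.
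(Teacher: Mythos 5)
Your proof is correct and takes essentially the same route as the paper: the paper's proof is simply ``apply Theorem~\ref{thm:van0}'' (with Theorem~\ref{thm:dubois} supplying the Frobenius splitting on the ultra-fibre). Your extra paragraphs on spreading out over $A$, semicontinuity, and the equivalence of $E_1$-degeneration with an equality of coherent-cohomology dimensions merely make explicit the base-change comparison between $X$ and $X_\U$ that the paper leaves implicit.
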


\begin{proof} Apply theorem~\ref{thm:van0}.
\end{proof}

\begin{cor}
 If $X$ is a complete complex variety and $\F^\dt$ a bounded complex of
 locally free sheaves, then
$$H^i(X,\dB_X^j(\log Z)\otimes \F^\dt) = 0$$ 
for $i+j> \dim X + \phi(\F^\dt)$. In particular, if $\F$  is a
$k$-ample vector bundle in
Sommese's sense, then $H^i(X,\dB_X^j(\log Z) \otimes \F)$
vanishes for $i+j\ge \dim X+rk(\F) +k$.
\end{cor}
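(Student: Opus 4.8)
The plan is to deduce the vanishing statement directly from Theorem~\ref{thm:van0} applied to the coherent Frobenius split complex $(\ddB_X^\dt(\log Z), F)$ furnished by Theorem~\ref{thm:dubois}. First I would check that $(\ddB_X^\dt(\log Z), F)$ is indeed coherent in the sense defined before Theorem~\ref{thm:van0}: it is a bounded complex, its differentials are differential operators (they come from the de Rham differential after applying $\R f_{\dt*}$), and $Gr_F^i\ddB_X^\dt(\log Z) \cong \dB_X^i(\log Z)[-i]$ is an object of $D^b_{coh}(\OO_X)$ with $\OO_X$-linear differentials by property (3) of Du Bois' construction. Passing to the ultra-fibre $X_\U$ (equivalently to $X_{\dt,\U}\to X_\U \supset Z_\U$) is harmless because cohomology on the complete complex variety $X$ agrees with cohomology on $X_\U$ up to the flat base change $k(\U)\cong\C$; this is the content of Lemma~\ref{lemma:Hiultra} together with Corollary~\ref{cor:standcohomfinite}, so that $H^i(X,\dB_X^j(\log Z)\otimes\F^\dt)\otimes k(\U)\cong H^i(X_\U,\dB_{X_\U}^j(\log Z_\U)\otimes\F^\dt)$.

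Next I would apply part (2) of Theorem~\ref{thm:van0} with $\cC^\dt = \ddB_X^\dt(\log Z)$. That gives $H^i(X, Gr_F^j\ddB_X^\dt(\log Z)\otimes\F^\dt) = 0$ for $i > m + \phi(\F^\dt)$, where $m = \max\{\iota \mid \mathcal H^\iota(Gr_F^j\ddB_X^\dt(\log Z))\neq 0\}$. Since $Gr_F^j\ddB_X^\dt(\log Z) = \dB_X^j(\log Z)[-j]$, the cohomology sheaves are $\mathcal H^\iota(\dB_X^j(\log Z)[-j]) = \mathcal H^{\iota - j}(\dB_X^j(\log Z))$, and the complex $\dB_X^j(\log Z) = Tot(\R f_{0*}\Omega^j_{X_0}(\log Z_0)\to\R f_{1*}\Omega^j_{X_1}(\log Z_1)\to\cdots)$ is supported in degrees $0$ through $\dim X - j$ by the dimension bound $\dim X_i \le \dim X - i$ from \cite{gnpp} (combined with Grauert--Riemenschneider-type vanishing as in the excerpt's discussion of the top level). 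Hence $\mathcal H^\iota(\dB_X^j(\log Z)[-j])$ vanishes unless $j \le \iota \le \dim X$, so $m \le \dim X$. Therefore $H^i(X, \dB_X^j(\log Z)\otimes\F^\dt[-j]) = 0$ for $i > \dim X + \phi(\F^\dt)$, i.e. $H^{i}(X, \dB_X^j(\log Z)\otimes\F^\dt) = 0$ for $i+j > \dim X + \phi(\F^\dt)$ after re-indexing the degree shift by $j$.

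For the final sentence about a $k$-ample bundle $\F$ in Sommese's sense, I would invoke the bound on Frobenius amplitude recorded in \cite{arapura, ara2}: a $k$-ample vector bundle of rank $r$ satisfies $\phi(\F) \le r + k - 1$. Substituting into the inequality just proved gives $H^i(X, \dB_X^j(\log Z)\otimes\F) = 0$ whenever $i + j > \dim X + r + k - 1$, that is, for $i + j \ge \dim X + rk(\F) + k$, as claimed. The one point requiring a little care here is that $\phi$ as redefined in Section~2 (via a large set of reductions rather than all but finitely many) still satisfies the same numerical bounds as $\phi_{old}$; but this is immediate from part (1) of the lemma in Section~2, $\phi(\F)\le\phi_{old}(\F)$, since the cited bounds were established for $\phi_{old}$.

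The main obstacle — really the only substantive point — is verifying that the degree bound $m \le \dim X$ holds uniformly, i.e.\ that the hypercohomology sheaves of $\dB_X^j(\log Z)$ are concentrated in the expected range; this rests on using a simplicial resolution with the sharp dimension bound $\dim X_i \le \dim X - i$ from \cite{gnpp}, exactly as in the elementary description of $\dB_X^n$ recalled above, and on the fact that in the ultra-fibre setting the same bound survives because it is a pointwise (in $p$) statement preserved under ultraproducts.
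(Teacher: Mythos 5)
Your route is the same as the paper's, which disposes of the corollary in two lines: apply Theorem~\ref{thm:van0}(2) to the coherent Frobenius split complex $(\ddB_X^\dt(\log Z),F)$ supplied by Theorem~\ref{thm:dubois}, and for the $k$-ample case quote the bound $\phi_{old}(\F)\le rk(\F)+k-1$ from \cite{arapura,ara2} together with $\phi\le\phi_{old}$. Your bookkeeping (the shift $Gr_F^j=\dB_X^j(\log Z)[-j]$, the re-indexing $i\mapsto i+j$, and the passage to the ultra-fibre by flat base change) is all correct and matches what the paper leaves implicit.

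The one step to be careful about is your justification that $m\le\dim X$, i.e.\ that $\mathcal{H}^\iota(\dB_X^j(\log Z))=0$ for $\iota>\dim X-j$. The dimension bound $\dim X_i\le\dim X-i$ by itself only shows that column $i$ of the total complex contributes in degrees $i+q$ with $q\le\dim X_i$, hence only that $\dB_X^j(\log Z)$ lives in degrees $\le\dim X$; fed into Theorem~\ref{thm:van0}(2) that would give vanishing only for $i>\dim X+\phi(\F^\dt)$, without the improvement by $j$ that the corollary asserts. Grauert--Riemenschneider does not close this gap for intermediate $j$: it applies to top-degree forms only (there is no vanishing $R^qf_{i*}\Omega_{X_i}^j=0$ for $q>\dim X_i-j$ in general), which is why the paper invokes it only for the description of $\dB_X^n$. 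The concentration of $\dB_X^j(\log Z)$ in degrees $[0,\dim X-j]$ is nevertheless a known theorem about the Du Bois complex (see \cite{gnpp}), and the correct move is to cite it rather than derive it from the crude estimates; with that citation in place your argument is complete and coincides with the paper's.
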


\begin{proof}
The first statement follows from theorem~\ref{thm:van0}.
For the second, we can appeal to the estimates on $\phi$ proved in 
\cite[6.1]{arapura} and \cite[2.13, 5.17]{ara2}.
\end{proof}

The special case of the last result for ample line bundles is due to  
Navarro  Aznar \cite[chap. V]{gnpp} when $Z=\emptyset$, and  Kovacs \cite{kovacs}
in general.


\begin{thebibliography}{ABCD}


\bibitem[A1]{arapura} D. Arapura, {\em Frobenius amplitude and strong
  vanishing for vector bundles,} Duke Math. J. 121 (2004)


\bibitem[A2]{ara2} D. Arapura, {\em Partial regularity and amplitude},
Amer. J. Math. 128 (2006)
 
\bibitem[BM]{bayer} D. Bayer, D. Mumford, {\em What can be computed
in algebraic geometry}, Comp. Alg. Geom and Comm. Alg., Cambridge (1993)

\bibitem[BS]{bell} J. Bell, A. Slomson, {\em Models and ultraproducts,}
  North-Holland (1969)



\bibitem[De]{deligne} P. Deligne, {\em Theorie de Hodge III},
  Publ. Math. IHES 44 (1974)


\bibitem[DI]{deligne-ill} P. Deligne, L. Illusie,
{\em Relevetments modulo $p^2$  et decomposition du
complexe de de Rham}, Inv. Math. 89 (1987), 247-280

\bibitem[DS] {van} L. van den Dries, K. Schmidt, {\em Bounds in theory of polynomial rings.
A nonstandard approach,} Invent. Math (1984)


\bibitem[Du]{dubois} P. Du Bois, {\em Complexe de de Rham filtr\'e
    d'une vari\'et\'e singuli\`ere.} Bull. Soc. Math. France, 109 (1981)




\bibitem[EGA]{ega}  A Grothendieck, J. Deudonn\'e, {\em  \'Element de
    G\'eometrie Alg\'ebriques} I   Publ. Math. IHES 4; II
  Publ. Math. IHES 8; III   Publ. Math. IHES 11,  17; IV Publ. Math. IHES  20, 24, 28, 32 (1964-67)

\bibitem[GNPP]{gnpp} F. Guillen, V. Navarro Aznar, P. Pascual-Gainza,
  F. Puerta.
{\em Hyperr\'esolutions cubiques et descente cohomologique.} 
  Lecture Notes in Mathematics, 1335. Springer-Verlag, (1988) 

\bibitem[H]{hartshorne} R. Hartshorne, {\em Algebraic geometry},
  Springer-Verlag (1977)


\bibitem[J]{dejong} A. J. de Jong, {\em Smoothness, semi-stability and
    alterations.}  . Publ. Math. IHES 83 (1996)



\bibitem[I]{illusie} L. Illusie,  {\em R\'eduction semi-stable et
    d\'ecomposition de complexes de de Rham \`a coefficients.} 
Duke Math. J. 60 (1990)

\bibitem[Kv]{kovacs} S. Kovacs, {\em Logarithmic Vanishing Theorems and 
Arakelov-Parshin Boundedness for Singular Varieties } Compositio
Math 131 (2002)


\bibitem[L]{laz} R. Lazarsfeld, {\em Positivity in algebraic geometry I},
  Springer-Verlag (2004)

\bibitem[PS]{ps} C. Peters, J. Steenbrink, {\em Mixed Hodge
structures}, Springer-Verlag (2008)

\bibitem[S1]{schout1} H. Schoutens, {\em Nonstandard tight closure for
    affine $\C$-algebras}, Manuscripta Math. 111 (2003)

\bibitem[S2]{schoutens} H. Schoutens, {\em Log-terminal singularities and vanishing
theorems via non-standard tight closure,} J. Alg. Geom. 14 (2005)

\bibitem[S3]{schout3} H. Schoutens, {\em The use of ultraproducts in
    commutative algebra}, Lect. Notes Math 1999, Springer (2010)


\bibitem[St]{steenbrink} J. Steenbrink, {\em Vanishing theorems on
    singular spaces.} Differential systems and singularities. Ast\'erisque  130 (1985)

\end{thebibliography}
\end{document}